\documentclass[12pt]{amsart}
\usepackage{amssymb,amsmath,amsthm}
\setlength{\textheight}{8.2in}
\addtolength{\oddsidemargin}{-.5in}
\addtolength{\textwidth}{1in}

\numberwithin{equation}{section}
\begin{document}

\theoremstyle{plain}
\newtheorem{theorem}{Theorem}[section]
\newtheorem{lemma}[theorem]{Lemma}
\newtheorem{proposition}[theorem]{Proposition}
\newtheorem{corollary}[theorem]{Corollary}
\newtheorem{conjecture}[theorem]{Conjecture}

\def\mod#1{{\ifmmode\text{\rm\ (mod~$#1$)}
\else\discretionary{}{}{\hbox{ }}\rm(mod~$#1$)\fi}}

\theoremstyle{definition}
\newtheorem*{definition}{Definition}

\theoremstyle{remark}
\newtheorem{remark}{Remark}[section]
\newtheorem{example}{Example}[section]
\newtheorem*{remarks}{Remarks}
\newcommand{\ndiv}{\hspace{-4pt}\not|\hspace{2pt}}
\newcommand{\cc}{{\mathbb C}}
\newcommand{\qq}{{\mathbb Q}}
\newcommand{\rr}{{\mathbb R}}
\newcommand{\nn}{{\mathbb N}}
\newcommand{\zz}{{\mathbb Z}}
\newcommand{\pp}{{\mathbb P}}
\newcommand{\al}{\alpha}
\newcommand{\be}{\beta}
\newcommand{\ga}{\gamma}
\newcommand{\ze}{\zeta}
\newcommand{\om}{\omega}
\newcommand{\mz}{{\mathcal Z}}
\newcommand{\mi}{{\mathcal I}}
\newcommand{\ep}{\epsilon}
\newcommand{\la}{\lambda}
\newcommand{\de}{\delta}
\newcommand{\tha}{\theta}
\newcommand{\De}{\Delta}
\newcommand{\Ga}{\Gamma}
\newcommand{\si}{\sigma}
\newcommand{\Exp}{{\rm Exp}}
\newcommand{\legen}[2]{\genfrac{(}{)}{}{}{#1}{#2}}
\def\End{{\rm End}}
\title{Linearly dependent powers of binary quadratic forms}

\author{Bruce Reznick}
\address{Department of Mathematics, University of 
Illinois at Urbana-Champaign, Urbana, IL 61801} 
\email{reznick@math.uiuc.edu}
\date{\today} 

\subjclass[2000]{Primary: 11E76, 11P05, 14M99; Secondary: 11D25, 11D41}
\begin{abstract}
Given an integer $d \ge 2$, what is the least $r$ so that
there is a set of binary quadratic forms $\{f_1,\dots,f_r\}$ for which
$\{f_j^d\}$ is non-trivially linearly dependent? We show that if $r \le 4$,
then $d \le 5$, and for $d \ge 4$, construct such a set with $r = \lfloor d/2\rfloor + 2$. 
Many explicit examples are given, along with techniques
for producing others.
\end{abstract}

\thanks{The author was supported by
Simons Collaboration Grant 280987.}

\maketitle

\section{Introduction}

For a fixed positive integer $k$, let $H_k(\cc^2)$ denote the
$(k+1)$-dimensional vector space of binary forms of degree $k$ with
complex coefficients. We say that two such forms are {\it distinct} if
they are not proportional, and we say that a set $\mathcal F = \{f_1,\dots,f_r\} \subset 
H_k(\cc^2)$ is {\it honest} if its elements are pairwise distinct. For $d \in \nn$, 
let $\mathcal F^d = \{f_1^d,\dots,f_r^d\}$; if $\mathcal 
F$ is honest, then so is  $\mathcal F^d$. 

When $k=1$, there is a simple classical criterion for the linear dependence
of  $\mathcal F^d$; see, e.g. \cite[Thm.4.2]{Re2}.

\begin{theorem}\label{T1}
If $\mathcal F = \{f_1,\dots,f_r\} \subset H_1(\cc^2)$ is honest, 
then $\mathcal F^d = \{f_1^d,\dots,f_r^d\}$ is linearly independent if and
only if $r \le d+1$. 
\end{theorem}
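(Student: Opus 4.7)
The plan is to split the theorem into two directions. The \emph{only if} direction is immediate: $H_d(\cc^2)$ has dimension $d+1$, so if $r > d+1$, the set $\mathcal F^d$ is automatically linearly dependent for dimensional reasons, with no use of its being a set of $d$-th powers.

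For the \emph{if} direction, assume $r \le d+1$ and show independence directly. Write $f_j = a_j x + b_j y$; honesty of $\mathcal F$ means that the projective points $[a_j:b_j] \in \pp^1$ are pairwise distinct. By the binomial theorem,
\[
f_j^d = \sum_{i=0}^d \binom{d}{i} a_j^{d-i} b_j^i \, x^{d-i} y^i,
\]
so a nontrivial relation $\sum c_j f_j^d = 0$ is the same as a nontrivial left null vector of the $r \times (d+1)$ matrix $M$ with entries $M_{ji} = \binom{d}{i} a_j^{d-i} b_j^i$. It therefore suffices to exhibit one non-vanishing $r \times r$ minor of $M$.

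In the generic case where all $a_j \ne 0$, I would divide row $j$ by $a_j^d$ and set $t_j = b_j/a_j$; the $t_j$ are pairwise distinct. Taking the first $r$ columns and pulling the binomial coefficients out column by column, the resulting minor is a nonzero scalar multiple of the classical Vandermonde determinant $\det(t_j^{i-1})_{1 \le i,j \le r} = \prod_{1 \le j < k \le r}(t_k - t_j) \ne 0$. If some $a_{j_0} = 0$, the corresponding point is $[0:1] \in \pp^1$; apply an element of $\mathrm{GL}_2(\cc)$ (a linear change of variables in $(x,y)$) that moves all $r$ points into the affine chart $a_j \ne 0$, and reduce to the previous case; linear independence of $d$-th powers is manifestly invariant under such substitutions.

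The main, and really rather minor, obstacle is the bookkeeping across affine charts on $\pp^1$. Conceptually, the content of the proof is the classical fact that the $d$-th Veronese map $\pp^1 \hookrightarrow \pp^d$, $[a:b] \mapsto [a^d : a^{d-1}b : \cdots : b^d]$, has image the rational normal curve of degree $d$, and that any $r \le d+1$ distinct points on this curve are projectively independent, with Vandermonde furnishing the determinantal certificate.
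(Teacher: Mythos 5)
Your proof is correct and takes essentially the same approach as the paper: both directions reduce to a Vandermonde determinant on the coefficient matrix of the $d$-th powers. The only cosmetic difference is that the paper completes $\mathcal F$ to $d+1$ forms and evaluates the homogeneous Vandermonde $\prod_{i<j}(\al_i\be_j-\al_j\be_i)$ directly (which handles the form proportional to $y$ without a change of chart), whereas you extract an $r\times r$ minor and move the point at infinity by a linear change of variables.
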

A version of this criterion is  generally true for $k \ge 2$; see, e.g. \cite[Thm.1.8]{Re4}. 
(The proofs of these theorems are given at the start of section two.)
 \begin{theorem}\label{T2}
If $\mathcal F = \{f_1,\dots,f_r\} \subset H_k(\cc^2)$, then it is {\it generally} true that  
$\mathcal F^d$ is linearly independent if and only if $r \le kd+1$. 
\end{theorem}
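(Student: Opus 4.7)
The statement has two directions; the ``only if'' direction is an absolute (not merely generic) dimension count, while the ``if'' direction is a genericity statement that needs a single witness.

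\smallskip\noindent\textbf{Only if.} I would begin by noting that $\dim H_{kd}(\cc^2) = kd+1$, so whenever $r \ge kd+2$ any $r$ elements of $H_{kd}(\cc^2)$, including $\mathcal F^d$, are automatically linearly dependent. This direction needs no genericity at all.

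\smallskip\noindent\textbf{If.} Here the plan is a standard open-dense-set argument. Parametrize the collection of honest ordered $r$-tuples by a Zariski open subset of $V := H_k(\cc^2)^r \cong (\cc^{k+1})^r$. For each tuple $(f_1,\dots,f_r)$ form the $r \times (kd+1)$ matrix $M(f_1,\dots,f_r)$ whose rows are the coefficient vectors of $f_j^d$. The set
\[
Z = \{(f_1,\dots,f_r) \in V : \operatorname{rank} M(f_1,\dots,f_r) < r\}
\]
is Zariski closed in $V$, being cut out by the vanishing of all $r \times r$ minors of $M$, each a polynomial in the coefficients of the $f_j$. Since $V$ is irreducible, it suffices to exhibit a single tuple outside $Z$ in order to conclude that $Z$ is a proper closed subset, and hence that its complement is open and dense.

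\smallskip\noindent\textbf{The witness.} Pick $kd+1$ distinct scalars $\alpha_1,\dots,\alpha_{kd+1} \in \cc$ and, for $j \le r$, set $f_j(x,y) = (x - \alpha_j y)^k$. Then $f_j^d = (x - \alpha_j y)^{kd}$, so $\mathcal F^d$ is a set of $d$-th powers of pairwise non-proportional linear forms raised now to exponent $kd$. By Theorem~\ref{T1} applied to the $kd$-th powers of the linear forms $x-\alpha_j y$, the set $\{(x-\alpha_j y)^{kd}: 1 \le j \le r\}$ is linearly independent precisely when $r \le kd+1$, which is exactly the range we are in. Hence this tuple lies outside $Z$, finishing the ``if'' direction.

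\smallskip\noindent\textbf{Main obstacle.} There is no real obstacle beyond being careful that the witness is honest (the $\alpha_j$ are distinct, so the $f_j$ are pairwise non-proportional) and that ``generic'' is interpreted as ``on a Zariski dense open subset of $V$'' rather than as an arithmetic statement. The mild subtlety is that we reduce the degree-$k$ case to the degree-$1$ case of Theorem~\ref{T1} by choosing $f_j$ to be a $k$-th power of a linear form; all the combinatorial work is already packaged in Theorem~\ref{T1}.
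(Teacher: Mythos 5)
Your proposal is correct and follows essentially the same route as the paper: both directions are handled by the dimension count and by specializing the witness to $k$-th powers of pairwise non-proportional linear forms, so that $f_j^d=(x-\alpha_j y)^{kd}$ and Theorem~\ref{T1} gives independence, hence the relevant determinant (or minor) is not identically zero. The only cosmetic difference is that the paper pads the family to exactly $kd+1$ elements and uses a single determinant, whereas you work with $r\times r$ minors of a rectangular matrix.
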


But there are singular cases, and these will be the focus of this paper. 
It is easy to find smaller values of $r$ for which
$\mathcal F^d$ is linearly dependent; for example,
  the Pythagorean parameterization gives three
quadratics whose squares are dependent:
 \begin{equation}\label{E:pyt}
(x^2-y^2)^2 +  (2xy)^2 = (x^2 + y^2)^2.
\end{equation}
 There are other ways of finding small dependent sets:
let $\{g_j(x,y)\}$ be an honest set of $d+2$ linear forms, then both $\{g_j(x^k,y^k)\}$
and  $\{\ell(x,y)^{k-1}g_j(x,y)\}$ (for a fixed linear form $\ell$) will be
dependent sets in  $H_k(\cc^2)$.

Given $r, d \in \mathbb N$, we say that an honest set of forms
$\{f_1,\dots,f_r\} \subseteq H_k(\cc^2)$ is a {\it $\mathcal W_k(r,d)$-set} if $\{f_j^d\}$ is linearly 
dependent. For example, \eqref{E:pyt} presents the $\mathcal W_2(3,2)$ set 
$\{x^2-y^2, 2xy, x^2+y^2\}$.  Let $\Phi_k(d)$ denote the smallest $r$ for which
 a $\mathcal W_k(r,d)$ set
 exists; clearly, $\Phi_k(d) \ge 3$. Theorem \ref{T1} implies that $\Phi_1(d) = d+2$.
  
Our goal in this paper is two-fold. First, we give upper and lower bounds for $\Phi_k(d)$ for
$k \ge 2$. 
Second, we describe all $\mathcal W_2(\Phi_2(d),d)$ sets for $d \le 5$. 
In (5) and (6) below, we use a
peculiar-looking function. If $e \ | \ d$, let
\[
\Theta_e(d) := 1 + \min_{t \in \nn} (t\cdot \tfrac de + \lfloor \tfrac et \rfloor).
\]
 We summarize our main results.
\begin{theorem}[Main Theorem]\label{MT}
\ 
\begin{enumerate}
\item $\Phi_{k+1}(d) \le \Phi_k(d)$.
\item $\Phi_k(3) = 3$.
\item (Liouville) $\Phi_k(d) \ge 4$  for $d \ge 3$ and all $k$.
\item  (Hayman) $\Phi_k(d) > 1 + \sqrt{d+1}$ for $d \ge 3$ and all $k$.
\item (Molluzzo-Newman-Slater) $\Phi_d(d) \le \Theta_d(d) 
 = 1 + \lfloor \sqrt{4d+1} \rfloor$.
\item If $e \ | \ d$, then  $\Phi_e(d) \le  \min\{\Theta_k(d): k \ge e,\  k \ | \ d\}$.
\item $\Phi_k(d) = 4$ for $d = 3,4,5$ and $k \ge 2$.
\item $\Phi_2(d) \ge 5$ for $d \ge 6$.
\item $\Phi_2(d) = 5$ for $d = 6,7$.
\item $\Phi_2(14) \le 6$.
\item $\Phi_2(d) \le \lfloor d/2 \rfloor +2$ for $d \ge 4$.
\end{enumerate}
\end{theorem}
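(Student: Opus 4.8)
The plan is to exhibit, for every $d\ge 4$, an explicit honest set of $\lfloor d/2\rfloor+2$ binary quadratics whose $d$-th powers are dependent; the engine throughout is to take a Waring decomposition of a low-degree binary form and pull it back under the ring homomorphism $\sigma\colon\cc[x,y]\to\cc[x,y]$ with $\sigma(x)=x^2$, $\sigma(y)=y^2$, which turns a linear form $x-ry$ into the quadratic $x^2-ry^2$ while preserving identities. I would split according to the parity of $d$, since this substitution behaves very differently in the two cases.

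For even $d=2e$ the construction is completely explicit. The balanced monomial $x^ey^e$ has Waring rank $e+1$, so there are distinct $r_1,\dots,r_{e+1}$ and nonzero $c_i$ with $x^ey^e=\sum_{i=1}^{e+1}c_i(x-r_iy)^d$ (the apolar ideal of $x^ey^e$ contains $\partial_x^{e+1}-\lambda\partial_y^{e+1}$, whose dual form has $e+1$ distinct roots for generic $\lambda$). Applying $\sigma$ sends the left side to $x^{2e}y^{2e}=(xy)^d$ and each $(x-r_iy)^d$ to $(x^2-r_iy^2)^d$, whence $(xy)^d=\sum_{i=1}^{e+1}c_i(x^2-r_iy^2)^d$. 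Thus $\{xy\}\cup\{x^2-r_iy^2:1\le i\le e+1\}$ is honest (the $r_i$ are distinct and $xy$ is not diagonal) and nontrivially dependent in $d$-th powers, i.e.\ a $\mathcal W_2(e+2,d)$-set with $e+2=\lfloor d/2\rfloor+2$.

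For odd $d=2e+1$ the same idea stalls: $\sigma$ lands every form in the subring $\cc[x^2,y^2]$, and a quadratic power $q^d$ lies there only when $q$ is diagonal (forcing rank one), so the clean $xy$ trick needs $d$ even. I would instead use a reflected pair together with diagonal forms. Given a quadratic $g$, set $\tilde g(x,y)=g(x,-y)$; since every monomial of $g^d$ has even total degree $2d$, the average $\tfrac12(g^d+\tilde g^d)$ is exactly the part of $g^d$ even in each variable, hence equals $\Gamma(x^2,y^2)$ for a binary form $\Gamma$ of degree $d$. If $\Gamma$ has Waring rank at most $e$, say $\Gamma(X,Y)=\sum_{i=1}^{e}c_i(X-r_iY)^d$, then $\tfrac12 g^d+\tfrac12\tilde g^d-\sum_{i=1}^{e}c_i(x^2-r_iy^2)^d=0$, giving the $\mathcal W_2(e+2,d)$-set $\{g,\tilde g\}\cup\{x^2-r_iy^2:1\le i\le e\}$, and once more $e+2=\lfloor d/2\rfloor+2$.

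The crux is therefore to produce a quadratic $g$ whose even-in-each-variable part $\Gamma$ has rank at most $e=(d-1)/2$, one below the generic rank $(d+1)/2$ of a degree-$d$ binary form. I would argue existence by a dimension count: as $g$ ranges over $\pp(H_2(\cc^2))$ the forms $\Gamma$ trace (at most) a surface in $\pp(H_d)=\pp^d$, while the rank-$\le e$ locus is the secant variety $\sigma_e$ of the rational normal curve, of dimension $2e-1=d-2$; since $2+(d-2)=d$, the projective dimension theorem forces these two subvarieties to meet. The main obstacle is precisely this odd step: one must check that the image is genuinely two-dimensional and meets $\sigma_e$ at a point of \emph{honest} rank $e$ with \emph{distinct} nodes $r_i$ (rather than a boundary point of higher actual rank), and then pin down an explicit such $g$ (equivalently, explicit nodes) so that the resulting set is concrete and honest. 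The even case and the arithmetic $e+2=\lfloor d/2\rfloor+2$ are routine by comparison.
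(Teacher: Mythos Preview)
Your even-$d$ argument is correct and is exactly the paper's: equation \eqref{E:monom} is precisely the pullback under your $\sigma$ of the explicit rank-$(s{+}1)$ Waring decomposition of $x^sy^s$ with nodes at the $(s{+}1)$-st roots of unity.

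For odd $d=2e+1$ your approach differs from the paper's, and the gap you flag is genuine. The projective-dimension argument does not produce a usable point: the image of $g\mapsto\Gamma$ already meets $\sigma_e$ along the entire locus $b=0$, where $g=\tilde g$ and $\Gamma=(aX+cY)^d$ has rank~$1$. So the intersection is nonempty for a trivial reason, and since this is an excess component (dimension $1$ versus expected dimension $0$), the dimension theorem yields no residual point with $b\neq 0$. Completing your argument would require either excess-intersection machinery or a direct computation for each odd $d$.

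The paper's odd case is instead fully constructive. For $d=2s+1$ set $g_j=\zeta_{s+1}^{-j}x^2+\alpha\,xy+\zeta_{s+1}^{j}y^2$ for $0\le j\le s$; averaging over the $(s{+}1)$-st roots of unity (Corollary~\ref{T:quadsynching}) gives
\[
\sum_{j=0}^{s} g_j^{\,d}=A_s(\alpha)\bigl(x^{3s+2}y^{s}+x^{s}y^{3s+2}\bigr)+B_s(\alpha)\,(xy)^d,
\]
where $A_s$ is an explicit polynomial of degree $s\ge 2$ in $\alpha$. Any nonzero root $\alpha_0$ of $A_s$ collapses the sum to a multiple of $(xy)^d$, yielding a $\mathcal W_2(s{+}2,d)$ set directly. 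Thus the paper trades your two reflected forms plus $e$ diagonal ones for $s{+}1$ synched trinomials plus $xy$, and the existence question reduces to the elementary fact that $A_s$ has a nonzero root.
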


All new parts of the Main Theorem except (8) and (11) have short proofs; these are given in 
section two. Examples give upper bounds for $\Phi_k(d)$; lower bounds are harder 
to find. The anomalous value in (10) for $d=14$ is difficult to explain, and prevents us
from conjecturing (11) as the exact value.
This problem has been studied in \cite{GH} and \cite{NS} without the degree
condition on the summands. The recent \cite{NSS} contains a
generalization of this question, replacing
$f_i^d$ with $\prod_j f_{ij}^{a_j}$ for fixed tuples $(a_j)$.

If $\mathcal F$ is a $\mathcal W_k(r,d)$ set, then there is an obvious way to transform the linear
dependence of the $d$-th powers into a more natural expression for any $m, 1 \le m \le r-1$:
\begin{equation}\label{E:scal}
\sum_{j=1}^r \la_j f_j^d = 0 \quad (\la_j \neq 0) \quad \implies p = \sum_{j=1}^m \tilde f_j^d = 
\sum_{j=m+1}^r \tilde f_j^d,
\end{equation}
where $\tilde f_j = (\pm \la_j)^{1/d} f_j$, for some $p$. In particular, a $\mathcal W_k(2m,d)$ set 
addresses the classical question of parameterizing two equal sums of $m$ $d$-th powers.
In this case, 
we say that \eqref{E:scal} gives {\it two representations} of $p$ as a sum of $m$ $d$-th powers.

If $\al x + \be y$ and $\ga x + \de y$ are distinct, then the map
$M:=(x,y) \mapsto  (\al x + \be y, \ga x + \de y)$
is an invertible change of variables (or {\it linear change} for short); let $(f \circ M)(x,y)$ 
denote $f(\al x + \be y, \ga x + \de y)$. (This is a {\it scaling} if $\be=\ga=0$.) If all members of
 $\mathcal F$ are subject to the same linear 
change, then the linear dependence of their $d$-th powers is unaltered. 
Any $\mathcal W_k(r,d)$ set can have its elements permuted and multiplied by various
non-zero constants without essentially affecting the nature of the dependence. 

So, suppose $\mathcal F$ is a $\mathcal W_k(r,d)$ set and
\begin{equation}\label{E:sim1}
\sum_{j=1}^r \la_j f_j^d = 0.
\end{equation}
If $\pi \in S_r$ is a permutation of $\{1,\dots r\}$, $c = (c_1,\dots, c_r) \in \mathbb (\cc\setminus 
\{0\})^r$, $M$ is a linear change, and $g_j = c_j(f_{\pi(j)} \circ M)$, $1 \le j \le r$,
then \eqref{E:sim1} is equivalent to 
\begin{equation}\label{E:sim2}
\sum_{j=1}^r (\la_{\pi(j)}\cdot c_j^{-d}) g_j^d= 0.
\end{equation}

In this situation, we say that  $\mathcal F = \{f_j\}$ and $\mathcal G = \{g_j\}$ 
(and the corresponding identities $\eqref{E:sim1} ,\eqref{E:sim2}$) are {\it cousins}. 
It is easy to  show cousinhood by exhibiting $M$, $\pi$ and $c$. Proving that 
$\mathcal F$ and $\mathcal G$
are {\it not} cousins may require {\it ad hoc} arguments.

We aim to  present identities as symmetrically as possible, often guided by 
 an old idea of Felix Klein.  Associate to each non-zero linear form 
$\ell(x,y) = s x - t y$ the image of $t/s \in \cc^*$ on the unit sphere $S^2$ under the
Riemann map. (Assign $\ell(x,y) = y$ to $\infty$ and $(0,0,1)$.)
Then associate to the binary form $\phi(x,y)= \prod_{j=1}^k (s_j x - t_j y)$
the image under the Riemann map of $\{t_j/s_j\}$,  and call it the {\it Klein set of $\phi$}. 
Given \eqref{E:sim1}, we shall be interested in the Klein set of  $\prod_{j=1}^r f_j$.   
In \eqref{E:pyt}, the Klein set of $(x^2-y^2)(2xy)(x^2+y^2)$ 
is the regular octahedron with vertices $\{\pm e_k \}$. 

Under the linear change $M$$:(x,y) \mapsto  (\al x + \be y, \ga x + \de y)$, the root
$t/s \mapsto T(t/s)$, where $T$ is the M\"obius transformation 
$T(z) = \frac{\de z - \be}{-\ga z+\al}$. 
Every rotation of the sphere corresponds to a M\"obius transformation of the complex plane, and so 
a rotation of the Klein set can be effected by  imposing a linear change on the forms. 
(Unfortunately, not every M\"obius transformation gives a rotation.) It often happens that 
$p = \sum f_j^d$ and $ p = p \circ M$, but $\sum (f_j\circ M)^d$ gives a different 
representation for $p$.

A trivial remark is surprisingly useful: 
\[
p = f_1^d + f_2^d = f_3^d + f_4^d \implies q = f_1^d -f_3^d = f_4^d - f_2^d
\]
for suitable forms $p,q$; we call this a {\it flip}. For $k=2$ and $d=3,4$, it can happen that $q$ has 
a third representation as $q = f_5^d + f_6^d$, but that no such new expression exists for $p$.
If
$f_1^d + f_2^d = f_3^d + f_4^d$ and $g_1^d + g_2^d = g_3^d + g_4^d
= g_5^d + g_6^d $, then $\mathcal F = \{f_1,\dots, f_4\}$ is a 
cousin of $\mathcal G = \{g_1,\dots,g_4\}$ and we say that  $\mathcal F$ is a 
{\it sub-cousin} of $\mathcal G' = \{g_1,\dots,g_6\}$.

We now present some examples of small dependent sets of $d$-th powers.
For integer $m \in \nn$, let $\ze_m =  e^{\frac{2\pi i}m}$ be a primitive $m$-th root of unity, with the 
usual conventions that $\om = \ze_3$ and $i = \ze_4$. A few interesting Klein sets will be noted.

The cubic identity with the simplest coefficients is probably
\begin{equation}\label{E:cubic}
\begin{gathered}
(x^2 + x y - y^2)^3 + (x^2 - x y - y^2)^3  = 2(x^2)^3 + 2(-y^2)^3 = 2 x^6 - 2y^6. 
\end{gathered}
\end{equation}
The right-hand side of \eqref{E:cubic} is unchanged by the scalings $y \to \om y$ and $y \to \om^2 
y$, so \eqref{E:cubic} shows that $2x^6-2y^6$ is a sum of two cubes in four  different ways. 
Under the linear change $(x,y) \mapsto (\al + \be, \al -\be)$, \eqref{E:cubic} is due to G\'erardin
see \cite[p.562]{D} in 1910; in its present form, it was noted by Elkies in \cite[p.542]{DG}. 

Here are two very simple quartic identities. 
The first generalizes to higher even degree; see \eqref{E:monom}, and the second is
in $\mathbb Z[x,y]$:
\begin{equation}\label{E:quartic}
(x^2 + y^2)^4 + (\om x^2 + \om^2 y^2)^4 + (\om^2 x^2 + \om y^2)^4= 18(xy)^4.
\end{equation}
\begin{equation}\label{E:quarcous}
(x^2 + 2xy)^4 + (2xy + y^2)^4 + (x^2 - y^2)^4 = 2(x^2+xy+y^2)^4.
\end{equation}
These are cousins. Upon making the linear change $(x,y) \mapsto  (i(x - \om y), (x - \om^2y))$ 
and division by $\sqrt{-3}$, \eqref{E:quartic} becomes \eqref{E:quarcous} up to a permutation of 
terms. The Klein
set of \eqref{E:quartic} is a regular hexagon at the  equator plus the poles.

A remarkable identity for $d=5$ was discovered independently by A. H. 
Desboves in 1880 (see  \cite{Deb},  \cite[p.684]{D}) and N. Elkies in 1995 (see \cite[p.542]{DG}):
\begin{equation}\label{E:quintic11}
\sum_{k=0}^3 (-1)^k (i^k x^2 + \sqrt{-2} \ xy + i^{-k}y^2)^5 = 0.
\end{equation}
The Klein set of \eqref{E:quintic11} is the  cube with vertices 
$\{(\pm\frac{\sqrt2}{\sqrt3},0,\pm\frac 1{\sqrt3} ), (0,\pm\frac{\sqrt2}{\sqrt3},\pm\frac 1{\sqrt3} )\}$.

The next two examples appear to be new in detail, but are in the spirit of \cite[\S4]{Re1}; 
the third explicitly appears there as (4.15); each is  derived in section two:

\begin{equation}\label{E:six}
\begin{gathered}
\sum_{k=0}^3 i^k(x^2 + i^k y^2)^6 = 80(x y)^6,
\end{gathered}
\end{equation}

\begin{equation}\label{E:seven}
\begin{gathered}
\sum_{k=0}^3 \left( i^{-k} x^2 +\sqrt{-6/5}\ x y + i^k y^2\right)^7 =
26\sqrt{3}  \cdot (-\sqrt{8/5}\ xy)^7,
\end{gathered}
\end{equation}

\begin{equation}\label{E:14}
\sum_{j=0}^4(\ze_5^j x^2 + i x y + \ze_5^{-j}y^2)^{14}
= 5^7(xy)^{14}. 
\end{equation}
The Klein set of \eqref{E:14} is the regular icosahedron, oriented so the vertices
are the two poles plus two parallel regular pentagons at latitude $z = \pm \frac 1{\sqrt 5}$.

The second main focus of this paper is the characterization of $\mathcal W_2(\Phi_2(d),d)$ 
sets for $d=3,4,5$. The characterization of $\mathcal W_k(3,2)$ is classical,
and can be proved by emulating the standard analysis of $a^2 + b^2 = c^2$ over $\nn$.
\begin{theorem}\label{W2}
If $p,q,r \in \mathbb C[x_1,\dots,x_n]$, $n \ge 1$ and 
$p^2 + q^2 = r^2$, then 
then there exist $f,g,h \in \mathbb C[x_1,\dots,x_n]$ so that 
$p = f(g^2 - h^2),  q = f(2gh),  r = f(g^2 + h^2).$
\end{theorem}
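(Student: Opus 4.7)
The plan is to emulate the classical proof of the parametrization of Pythagorean triples, with $\zz$ replaced by the UFD $R := \cc[x_1,\dots,x_n]$. Two ring-theoretic features of $R$ make this work cleanly: unit factorization (it is a UFD), and the fact that the unit group $\cc^*$ consists entirely of squares. So there is no obstruction coming from sign/unit ambiguity that one must handle in $\zz$.

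First I would reduce to a ``primitive'' triple. Let $f = \gcd(p,q,r)$ in $R$ and write $p = fp_1$, $q = fq_1$, $r = fr_1$, so $p_1^2 + q_1^2 = r_1^2$ with $\gcd(p_1,q_1,r_1) = 1$. Then $p_1, q_1, r_1$ are in fact pairwise coprime: if an irreducible $\pi$ divides two of them, the identity $p_1^2 + q_1^2 = r_1^2$ forces $\pi$ to divide the third, contradicting the $\gcd$ being $1$.

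Next I would factor $p_1^2 = r_1^2 - q_1^2 = (r_1-q_1)(r_1+q_1)$ and show the two factors are coprime. Any common irreducible divisor divides their sum $2r_1$ and difference $2q_1$; since $2$ is a unit in $R$ and $\gcd(q_1,r_1)=1$, the two factors are coprime. Because $R$ is a UFD and their product is a square with coprime factors, each factor is itself a square up to a unit; but every unit of $R$ is a square in $\cc^*$, so there exist $g,h \in R$ with $r_1 + q_1 = 2g^2$ and $r_1 - q_1 = 2h^2$ (the factor of $2$ is inserted for normalization, absorbing a constant into $g,h$). Solving gives $r_1 = g^2+h^2$, $q_1 = g^2-h^2$, and $p_1^2 = 4g^2h^2$, whence $p_1 = \pm 2gh$; the sign can be absorbed into $h$. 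Multiplying through by $f$ yields the required forms (after interchanging $p$ and $q$, which is legitimate since the hypothesis is symmetric in them).

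The only step with any subtlety is the coprime-factors-whose-product-is-a-square step, and this is exactly where the UFD hypothesis and the ``units are squares'' hypothesis on $R$ are needed; in $\zz$ one additionally tracks signs and parities, but over $\cc$ those issues vanish. I do not expect any genuine obstacle; the argument is a direct transcription of the elementary-number-theory proof with the UFD structure of $\cc[x_1,\dots,x_n]$ doing the work of unique factorization in $\zz$.
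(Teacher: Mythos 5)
Your proof is correct and is precisely the argument the paper intends: the paper gives no detailed proof of Theorem \ref{W2}, remarking only that it ``can be proved by emulating the standard analysis of $a^2+b^2=c^2$ over $\mathbb{N}$,'' and your UFD transcription (reduce to a primitive triple, factor $r_1^2-q_1^2$ into coprime factors, use that every unit of $\mathbb{C}[x_1,\dots,x_n]$ is a square) is exactly that emulation. The swap of the roles of $p$ and $q$ relative to the stated conclusion is harmless, as you note, by the symmetry of the hypothesis.
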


The proof of the following theorem will be found in the companion paper \cite{Re3}.
\begin{theorem}\label{W3}
Every $\mathcal W_2(4,3)$ set is a sub-cousin of a member of
the $\mathcal W_2(6,3)$ family given below, for some $\al \neq 0, \pm 1$:
\begin{equation}\label{E:threefold}
\begin{gathered}
(\al x^2 - xy + \al y^2)^3 +\al( - x^2 +\al xy - y^2)^3 \\ 
= (\om^2 \al x^2 - xy + \om \al y^2)^3 +\al( - \om^2 x^2 +\al xy - \om y^2)^3 \\
= (\om\al x^2 - xy + \om^2 \al y^2)^3 +\al( - \om x^2 +\al xy - \om^2 y^2)^3 
 \\ =  (\al^2-1)(\al x^3 + y^3)(x^3 + \al y^3).
\end{gathered}
\end{equation}
If the first two lines of \eqref{E:threefold} are read as $f_1^3 + f_2^3 = f_3^3 + f_4^3$, then
$f_1^3 - f_4^3 = f_3^3  - f_2^3$ also has a third representation as a sum of two cubes, but
$f_1^3 - f_3^3 = f_4^3  - f_2^3$ does not. 
\end{theorem}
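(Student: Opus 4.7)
The plan is to start from an arbitrary $\mathcal W_2(4,3)$ set, which by \eqref{E:scal} produces an identity $f_1^3+f_2^3 = f_3^3+f_4^3 = p$ for some binary sextic $p$, and to show via cousinhood that this identity can be put in the shape of the first equality of \eqref{E:threefold} for some $\alpha \notin \{0,\pm 1\}$. Once that is done, the second and third lines of \eqref{E:threefold} come automatically: the right-hand side $(\alpha^2-1)(\alpha x^3+y^3)(x^3+\alpha y^3)$ is invariant under the scaling $(x,y)\mapsto(\omega x,\omega^2 y)$, and this symmetry permutes the two cubes on the left to manufacture two further representations of $p$, exhibiting the original $4$-set as a sub-cousin of this $\mathcal W_2(6,3)$ configuration.

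First I would exploit the cousinhood freedom to normalize. Linear change of variables together with the scalings $f_j \mapsto c_j f_j$ (allowed by \eqref{E:sim2}) provides roughly an eight-parameter group acting on our $4$-tuple. One expects the moduli space of $\mathcal W_2(4,3)$ sets to be one-dimensional, and the single surviving invariant should be $\alpha$. A concrete normalization is to use the linear change to place three of the roots of, say, $(f_1-f_3)(f_1+f_2)$ on $\mathbb P^1$ at $0$, $1$, and $\infty$, and then to scale the $f_j$ so that leading coefficients match the target form.

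The analytic heart of the argument is unique factorization in the UFD $\mathbb C[x,y]$. Using $a^3-b^3 = (a-b)(a-\omega b)(a-\omega^2 b)$ and $a^3+b^3 = (a+b)(a+\omega b)(a+\omega^2 b)$, both the original identity and its flip $f_1^3-f_3^3 = f_4^3-f_2^3$ factor into products of three binary quadratics on each side. Equating these factorizations and further splitting each quadratic into its two linear factors forces two (resp. three, with the flip) distinct partitions of the same six-element root set of $p$ into three pairs. These pairings yield a system of linear relations among the six quantities $f_i - \omega^k f_j$ that, after the normalization above, is rigid enough to pin $f_1,\dots,f_4$ down to a single parameter $\alpha$ matching the coefficients in \eqref{E:threefold}.

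I expect the main obstacle to be the combinatorics of the factor matching. A priori there are several non-equivalent ways to pair up the six linear factors of $p$ between its two factorizations, and the flip imposes yet another pairing; ruling out (or collapsing under cousin equivalence) the spurious matchings, and showing that only the one leading to \eqref{E:threefold} gives an honest set, demands careful bookkeeping. A subsidiary subtlety is that the forbidden values $\alpha\in\{0,\pm 1\}$ correspond to genuine degenerations of the family — some of the $f_j$ become proportional, or the cubic factors $\alpha x^3+y^3$ and $x^3+\alpha y^3$ share a root — and these must be distinguished from bona fide new singular solutions so that no $\mathcal W_2(4,3)$ set falls outside the claimed family.
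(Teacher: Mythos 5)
First, a point of reference: this paper does not prove Theorem \ref{W3} at all --- it explicitly defers the proof to the companion paper \cite{Re3}, and the Sylvester/diagonalization machinery of sections three and four is only deployed for $d\ge 4$ (the text says $d=3$ ``is best handled by other techniques''). So your proposal cannot be checked against an in-paper argument; it has to stand on its own, and as written it is a plan in which the hard steps are identified but not carried out. The one step you do essentially verify is the easy one: the scaling $(x,y)\mapsto(\omega x,\omega^2 y)$ fixes $xy$, $x^3$, $y^3$ and multiplies $x^2$, $y^2$ by $\omega^2$, $\omega$, so it does carry the first line of \eqref{E:threefold} to the second and third. That produces the $\mathcal W_2(6,3)$ family once the first line is established, but it does nothing toward the classification claim.

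The genuine gaps are these. (a) Your dimension count suggesting a one-parameter moduli space is a heuristic; the content of the theorem is precisely that the variety of $\mathcal W_2(4,3)$ sets has no component \emph{other} than the one containing \eqref{E:threefold}, and a parameter count cannot rule out extra components --- indeed for $d=4$ the analogous variety has two inequivalent families (Theorem \ref{W4}), so ``one expects'' is not evidence here. (b) The factor-matching step is where the theorem actually lives: unique factorization matches the six \emph{linear} factors of the two sides of $f_1^3-f_3^3=f_4^3-f_2^3$, not the three quadratics, so one must enumerate all re-partitions of the six roots into pairs, treat the degenerate configurations with repeated roots (note $(\alpha^2-1)(\alpha x^3+y^3)(x^3+\alpha y^3)$ does degenerate), and show every honest surviving case is a cousin of \eqref{E:threefold}; you flag this as ``careful bookkeeping'' and do none of it. (c) The second assertion of the theorem --- that the flip $f_1^3-f_4^3=f_3^3-f_2^3$ admits a third representation as a sum of two cubes while $f_1^3-f_3^3=f_4^3-f_2^3$ does not --- is a nonexistence statement requiring its own argument (for instance a rank computation in the spirit of Theorem \ref{T:syl}, or a direct analysis of the factorizations), and your proposal does not address it at all.
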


(Put $(\al,x,y) \mapsto (i,\ze_8^3x,\ze_8^5y)$  in the first line of
\eqref{E:threefold} to get  \eqref{E:cubic}.)
After the linear change: $(x,y) \mapsto (i x + \sqrt{3}\ y, i x-\sqrt{3}\ y)$,
 \eqref{E:threefold} becomes
\begin{equation}\label{E:threefoldreal}
\begin{gathered}
((1 - 2 \al) x^2 + 3 (1 + 2 \al)y^2)^3 + \al ((2-\al) x^2 - 3 (2 + \al) y^2)^3 \\ 
= ((1 + \al) x^2 + 6 \al x y + 3 (1-\al) y^2)^3 + \al ( -(1 + \al) x^2 - 6 x y + 3 (1 -\al) y^2)^3\\
= ((1 + \al) x^2 - 6 \al x y + 3 (1-\al) y^2)^3 + \al( -(1 + \al) x^2 + 6 x y + 3 (1 -\al) y^2)^3.
\end{gathered}
\end{equation}
If $\al \in \mathbb Q$, then all forms in \eqref{E:threefoldreal} are in $\mathbb Q[x,y]$, and if $\al$ 
is a rational cube, then \eqref{E:threefoldreal} gives  
solutions to $f_1^3 + f_2^3 = f_3^3 + f_4^3$ in $\mathbb Q[x,y]$. Historically, these
were used to parameterize solutions to the Diophantine equations $a^3+b^3=c^3+d^3$ over $\nn$.

\begin{theorem}\label{W4}
Every $\mathcal W_2(4,4)$ is a cousin of  \eqref{E:quartic} or a sub-cousin of
 \eqref{E:quartic13}:
\begin{equation}\label{E:quartic13}
\begin{gathered}
(x^2 + \sqrt 3\ xy - y^2)^4 - (x^2 - \sqrt 3\ xy - y^2)^4 \\
= (\om^2 x^2 + \sqrt 3\ x y - \om y^2)^4 - (\om^2 x^2 - \sqrt 3\ x y -
\om y^2)^4 \\
= (\om\ x^2 + \sqrt 3\ x y - \om^2 y^2)^4 - (\om\ x^2 - \sqrt 3\ x y -
\om^2 y^2)^4 \\
= 8 \sqrt 3\  xy\ (x^6 - y^6). 
\end{gathered}
\end{equation}
\end{theorem}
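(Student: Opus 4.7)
The plan is to normalize the dependence and then use unique factorization in $\mathbb{C}[x,y]$ to classify the possibilities. Let $\mathcal F = \{f_1, f_2, f_3, f_4\}$ be a $\mathcal W_2(4,4)$ set with $\sum \la_j f_j^4 = 0$, all $\la_j \ne 0$. Rescaling $f_j$ by a fourth root of $\la_j$ reduces each coefficient to $1$; then, since $(\ze_8 f_j)^4 = -f_j^4$ and $\ze_8 f_j$ is proportional to $f_j$, the sign pattern in the dependence is flexible and can be placed in the symmetric ``$2+2$'' form
\begin{equation*}
f_1^4 + f_2^4 = f_3^4 + f_4^4 \quad \Longleftrightarrow \quad f_1^4 - f_3^4 = f_4^4 - f_2^4.
\end{equation*}
The aim is to show that, up to cousinhood, $\mathcal F$ is either the four-element set underlying \eqref{E:quartic} or a four-element subset of a cousin of the six-element set underlying \eqref{E:quartic13}.

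Factor both sides of $f_1^4 - f_3^4 = f_4^4 - f_2^4$ via the identity $a^4 - b^4 = \prod_{k=0}^{3}(a - i^k b)$:
\begin{equation*}
(f_1 - f_3)(f_1 + f_3)(f_1 - i f_3)(f_1 + i f_3) = (f_4 - f_2)(f_4 + f_2)(f_4 - i f_2)(f_4 + i f_2).
\end{equation*}
Each factor is a binary quadratic in $x, y$, hence a product of two linear forms, giving eight linear forms on each side. Because $\mathbb{C}[x,y]$ is a unique factorization domain, the multisets of linear factors on the two sides must coincide up to scalars, and this matching imposes a finite system of algebraic constraints on $f_1, f_2, f_3, f_4$.

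To reduce the combinatorics, I would use the $PGL_2$-freedom of a linear change to move two of the linear factors to $x$ and $y$; the remaining six linear factors on each side then match in only finitely many ways. For each pairing, solve the resulting polynomial system for the coefficients of $f_1, f_2, f_3, f_4$. I expect the branches to fall into two cousin classes. One class is \emph{octahedral}: the common value $p := f_1^4 - f_3^4$ is forced to factor (after a suitable linear change) as a constant multiple of $xy(x^6 - y^6)$, so that the eight linear factors of $p$ decompose in three different ways into two groups of four, exhibiting explicitly the three pairs of \eqref{E:quartic13}; $\mathcal F$ then appears as any two of these three pairs, hence as a sub-cousin. The other class is \emph{reducible}: the factorization forces one of the quadratic factors on the left to coincide (up to scalar) with one on the right, collapsing the $2+2$ identity to a $3+1$ identity whose coefficients, by direct computation, fit the cube-root-of-unity pattern of \eqref{E:quartic}.

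The main obstacle is the case analysis itself: there are many combinatorial possibilities for how the eight linear factors pair across the two sides, and several degenerate subcases must be handled carefully --- quadratic factors with a double root, two factors $f_1 \pm i^k f_3$ sharing a common linear form (which forces that form to divide both $f_1$ and $f_3$), and coincidences that would make two $f_j$ proportional (violating the honesty of $\mathcal F$). Each branch must be shown either to reproduce one of the two canonical families or to be impossible. The honesty condition, together with the nontriviality of the dependence ($\la_j \ne 0$), is essential for excluding the degenerate branches and for making the classification rigid.
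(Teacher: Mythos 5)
Your reduction to $f_1^4-f_3^4=f_4^4-f_2^4$ and the observation that both sides factor into the same multiset of eight linear forms are correct, and this is a genuinely different route from the paper (which instead uses Theorem \ref{T:even} to diagonalize $f_1,f_2$ so that $p$ is even, classifies the non-even pairs $\{f_3,f_4\}$ with $f_3^4+f_4^4$ even into ``tame'' and ``wild'' normal forms, and then decides representability via the rank of the 2-Sylvester matrix of Theorem \ref{T:syl}). But as written your argument has a genuine gap: the entire classification is deferred to an unexecuted case analysis. The assertion that ``the branches fall into two cousin classes'' \emph{is} the theorem; nothing in the proposal constrains which of the roughly $105$ re-pairings of the eight lines can arise as the factorizations of the four members $f_4-i^kf_2$ of a second pencil evaluated at the fourth roots of unity, nor is any invariant (a cross-ratio or resultant condition on the eight points of $\pp^1$) identified that would make those branches solvable. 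Each branch is a polynomial system in the continuous parameters of the $f_j$, and showing that every consistent branch lands in one of the two cousin classes is exactly the ``equation wrangling'' the paper performs by other means; it cannot be waved through.

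There is also a concrete error in your description of the second class. For \eqref{E:quartic}, normalized to the $2+2$ form, the four quadratics $f_1-i^kf_3$ on one side are all even while the four on the other side all have a nonzero $xy$-coefficient (this holds in every flip, by the cyclic symmetry $(x,y)\mapsto(\om x,\om^2 y)$ of the summands), so no quadratic factor on the left is proportional to one on the right. Hence the proposed mechanism --- a coincidence of quadratic factors ``collapsing the $2+2$ identity to a $3+1$ identity'' --- does not produce \eqref{E:quartic}, and a case analysis organized around that dichotomy would either miss this family or misclassify it. (The distinguishing feature of \eqref{E:quartic} is a \emph{linear} dependence among three of the four quadratics, which is a different phenomenon from a shared factor in the pencils.) To salvage the approach you would need both to repair this dichotomy and to carry out the combinatorial matching in full, including the degenerate configurations with repeated lines that you correctly flag but do not treat.
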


In an earlier version of this work (see e.g. \cite[(3.9)]{Re1}), the identity
\begin{equation}\label{E:quartic12}
\begin{gathered}
(\sqrt 3\ x^2 + \sqrt 2\ xy - \sqrt3\ y^2)^4 +
(\sqrt 3\ x^2 - \sqrt 2\ xy - \sqrt3\ y^2)^4 \\
= (\sqrt 3\ x^2 + i \sqrt 2\ xy + \sqrt3\ y^2)^4 +  
(\sqrt 3\ x^2 - i \sqrt 2\ xy + \sqrt3\ y^2)^4 \\
= 18x^8 - 28 x^4y^4 + 18 y^8.
\end{gathered}
\end{equation}
was given as an alternative in Theorem \ref{W4}; 
\eqref{E:quartic12} turns out to be a sub-cousin of \eqref{E:quartic13}, see Theorem \ref{T:44}.
When scaled, \eqref{E:quartic12} appears in Desboves \cite[p.243]{Deb}.
The set in \eqref{E:quartic} is not a sub-cousin of \eqref{E:quartic13}:
three of the quadratics in \eqref{E:quartic} are linearly dependent, and 
 no three quadratics in \eqref{E:quartic13} are dependent.

The situation for quintics is simpler.
\begin{theorem}\label{W5}
Every $\mathcal W_2(4,5)$ set is a cousin of \eqref{E:quintic11}.
\end{theorem}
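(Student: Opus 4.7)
The plan is to factor the identity, analyze the resulting symmetries on $\pp^1$, and normalize to the Desboves--Elkies configuration. First, by Theorem \ref{MT}(7) we have $\Phi_2(5) = 4$, so in any dependence $\sum_{j=1}^{4} \la_j f_j^5 = 0$ every $\la_j$ is nonzero. Because $d = 5$ is odd, absorbing each $\la_j$ into $f_j$ by a fifth-root rescaling (a cousinhood move) and relabeling reduces to the normal form
\[
p := f_1^5 + f_2^5 = f_3^5 + f_4^5.
\]

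Next, factor both representations using $a^5 + b^5 = \prod_{k=0}^{4}(a - \mu_k b)$ with $\{\mu_k\}$ the fifth roots of $-1$:
\[
\prod_{k=0}^{4}(f_1 - \mu_k f_2) \;=\; p \;=\; \prod_{k=0}^{4}(f_3 - \mu_k f_4).
\]
The five quadratic factors on each side lie in a 2-dimensional pencil of $H_2(\cc^2)$: $L_1 = \langle f_1, f_2\rangle$ and $L_2 = \langle f_3, f_4\rangle$. These pencils are distinct, for if $L_1 = L_2$ then writing $f_3 = af_1 + bf_2$ and $f_4 = cf_1 + df_2$ and comparing coefficients in $(af_1 + bf_2)^5 + (cf_1 + df_2)^5 = f_1^5 + f_2^5$, Sylvester's uniqueness of the Waring decomposition of the binary form $x^5 + y^5$ forces $\{f_3, f_4\} = \{f_1, f_2\}$ up to fifth-roots of unity, contradicting honesty. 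Each pencil then defines a degree-2 rational map $\phi_i \colon \pp^1 \to \pp^1$ with Galois involution $\sigma_i \in \mathrm{PGL}_2(\cc)$, pairing the 10 zeros of $p$ on $\pp^1$ in two different ways.

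I would then identify the group $G := \langle \sigma_1, \sigma_2 \rangle$. Since both $\sigma_i$ preserve the finite zero-set of $p$, $G$ is a finite subgroup of $\mathrm{PGL}_2(\cc)$; being generated by two distinct involutions, $G$ is dihedral $D_n$ for some $n \ge 2$. A basic observation is that $\sigma_i$ preserves the zero-set of every element of $L_i$ (such zero-sets are fibers of $\phi_i$), hence acts on $L_i$ as the scalar $\pm 1$. Combining the orbit-size analysis on $\pp^1$ (orbits have sizes in $\{2n, n, 2\}$) with the constraint that the five points $\{f_1 - \mu_k f_2\}_{k=0}^{4}$ on $L_1 \cong \pp^1$ lie in the specific $\mathrm{PGL}_2$-equivalence class of the fifth roots of $-1$ eliminates $n > 2$, leaving $G$ as the Klein four-group with orbit decomposition $10 = 4 + 4 + 2$ of the zero-set.

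Finally, a linear change of variables brings $G$ into its standard realization on $\pp^1$, generated by $t \mapsto 1/t$ and $t \mapsto -1/t$ (where $t = y/x$), with fixed-point sets $\{1, -1\}$, $\{i, -i\}$, and $\{0, \infty\}$. In these coordinates $L_1 = \langle x^2 + y^2, xy\rangle$ and $L_2 = \langle x^2 - y^2, xy\rangle$, and imposing the fifth-root factorization on each pencil yields $f_j = i^j x^2 + \sqrt{-2}\,xy + i^{-j} y^2$, which is exactly \eqref{E:quintic11}. The principal obstacle is the group classification: ruling out the dihedral groups $D_n$ with $n > 2$---especially $D_5$, where a regular-pentagonal $\pp^1$-orbit is superficially compatible with the fifth-root cross-ratio structure---requires exploiting the \emph{joint} compatibility of both pencils together with the scalar action of each $\sigma_i$ on its own $L_i$. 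The resulting rigidity (no continuous parameters remain, in contrast to the cubic and quartic cases) is why the quintic classification is cleaner.
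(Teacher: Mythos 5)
Your approach is genuinely different from the paper's (which proves Theorem \ref{W5} by combining Theorems \ref{T:even}, \ref{T:tameanswer} and \ref{T:wildanswer}: diagonalize $f_1,f_2$ simultaneously, then classify the non-even pairs $\{f_3,f_4\}$ with $f_3^5+f_4^5$ even by forcing the odd coefficients $a_3,a_5,a_7$ to vanish and testing candidates with the 2-Sylvester matrix of Theorem \ref{T:syl}). Your setup is sound: the normalization, the coprimality and distinctness of the two pencils, and the finiteness and dihedrality of $G=\langle\sigma_1,\sigma_2\rangle$ all hold up. But the proof stops exactly where the theorem lives. The elimination of $D_n$ for $n>2$ is asserted rather than carried out --- you say so yourself --- and the mechanism you propose for it is doubtful. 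The group $G$ does not act on the pencil $L_1$: an element $g$ carries fibers of $\phi_1$ to fibers of the conjugated involution $g\sigma_1g^{-1}$, so only the centralizer of $\sigma_1$ in $G$ acts on $L_1\cong\pp^1$, and for $n$ odd that centralizer is $\{1,\sigma_1\}$, which acts trivially on the quotient. Hence ``the five points form the fifth roots of $-1$'' is not a constraint $G$ can see directly, and the pentagonal configuration is superficially compatible with a $D_5$ ambient symmetry at the level of orbit counts ($10=5+5$ or $2\cdot 5$); the values $n=3,4,6,8,10$ also admit invariant divisors of degree $10$ with multiplicities at most $2$. Disposing of each case requires writing $p$ as a $\rho$-invariant form, re-expressing it as a binary quintic in the pencil coordinates, and showing via Sylvester's criterion that it is not a sum of two fifth powers of linear forms --- which is essentially the computation the paper performs, just organized differently.

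There is a second, independent gap at the end: even granting $G\cong \zz/2\times\zz/2$ and the normalization $L_1=\langle x^2+y^2,xy\rangle$, $L_2=\langle x^2-y^2,xy\rangle$, the sentence ``imposing the fifth-root factorization on each pencil yields $f_j=i^jx^2+\sqrt{-2}\,xy+i^{-j}y^2$'' conceals the whole rigidity statement. A priori $\{f_3,f_4\}$ ranges over a two-parameter family inside its pencil, and one must show that requiring $f_3^5+f_4^5$ to also be a sum of two fifth powers from $L_1$ pins the coefficient down to $\sqrt{-2}$ (in the paper this is where the conditions $a_3=a_5=0$ force $\be=-1$, $\la^2=-1$, $\al^2=2$, leading to \eqref{E:w5} and then \eqref{E:tame51}). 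Without these two computations your argument produces a plausible normal form but does not prove uniqueness up to cousinhood.
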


Here is an outline of the rest of the paper.
In section two, we prove Theorems \ref{T1} and \ref{T2} and Theorem \ref{MT} except (8).
We also recall ``synching" from \cite{Re1} 
as a tool for  finding ``good" $\mathcal W_k(r,d)'s$ -- the idea was inspired by a formula
of Molluzzo \cite{M} -- and use it to prove several parts of Theorem \ref{MT}.

In section three, we recall two results familiar to 19th century algebraists:  
a specialization of Sylvester's algorithm for determining the sums of two $d$-th powers of linear
forms and a result on the simultaneous  diagonalization of quadratic forms. 
We use these to lay out our strategy for proving Theorem \ref{MT}(8). Suppose
\[\label{E:key4}
p(x,y) = f_1^d(x,y) + f_2^d(x,y) = f_3^d(x,y) + f_4^d(x,y) 
\]
for an honest set $\{f_1,f_2,f_3,f_4\}$ of quadratics. There is a linear 
change which simultaneously diagonalizes $f_1$ and 
$f_2$ (making $p$ even), but neither $f_3$ nor $f_4$ is even. We then make a 
systematic study of non-even $\{f_3,f_4\}$ for which $p =f_3^d + f_4^d$ is even, and check back to
see whether $p$ can be written as $f_1^d+f_2^d$. For $d \ge 3$, a shorter method can be used 
to prove Theorem \ref{W3}; see  the companion paper  \cite{Re3}.
 
Section four is devoted to  implementing
in detail the strategy outlined above; this simultaneously proves Theorems  
\ref{W4} and \ref{W5}, as well as Theorem \ref{MT}(8).
The proofs of Theorems \ref{T:tameanswer} and \ref{T:wildanswer} contain a great deal of 
``equation wrangling";
however, the reader should know that this has been greatly condensed from earlier drafts. 

In section five, we do a brief review of the literature in the subject and derive the examples for 
for $d \le 5$ via {\it a priori} constructions.  We also discuss how Newton's theorem on
symmetric forms helps explain \eqref{E:14}, similar to the argument for \eqref{E:quintic11} 
given in \cite{Re1}. Corollaries \ref{C:62} and  \ref{C:63} present the classification of forms which 
can be written as a sum of two $d$-th powers of quadratic forms and, for $d \ge 4$,
 those which have more than one representation. We  suggest some further areas of exploration 
 and finish with Conjecture \ref{C:end} about the true growth of $\Phi_k(d)$.

 The author has been working on this project for a very long time; online seminar notes 
\cite{Re5}
are dated 2000. He wishes to thank 
Andrew Bremner, Noam Elkies, Jordan Ellenberg, Andrew Granville, Samuel Lundqvist,
Cordian Rainer and Boris Shapiro for encouraging conversations and
useful emails, even if after all this time, they don't remember what they said. 
Many thanks to Becky  Burner of the Illinois Mathematics Library for finding an online copy of
\cite{Deb}.
 
\section{Some proofs, and synching} 
We begin with proofs of Theorems \ref{T1} and \ref{T2}.

\begin{proof}[Proof  of Theorem \ref{T1}]
If $r > d+1 = \dim(H_d(\cc^2)$), then   $\mathcal F^d$ is dependent. Suppose $ r \le d+1$ 
and let $f_i(x,y) = \al_i x + \be_i y$. Define (if necessary) distinct $f_j$ for $r+1 \le j \le d+1$
by $(\al_j,\be_j) = (1,m_j)$, where $m_j\al_i \neq \be_i, 1 \le i \le r$, and express 
$\{f_1^d,\dots,f_{d+1}^d\}$  in terms of the  basis $\{\binom dv x^{d-v}y^v\}$. The resulting
$(d+1) \times (d+1)$ matrix, $[\al_i^{d-v}\be_i^v]$, is Vandermonde, with determinant
$
\prod_{1 \le i < j \le d+1} (\al_i \be_j  - \al_j \be_i) \neq 0,
$
since $\mathcal F$ is honest. 
\end{proof}
\begin{proof}[Proof  of Theorem \ref{T2}]
Again, if $r > kd+1$, then  $\mathcal F^d$ is linearly dependent
by dimension. Suppose $f_j(x,y) = \sum_{\ell=0}^k \binom k{\ell}
\al_{\ell,j}x^{k-\ell}y^{\ell}$. If $r < kd+1$, again add
pairwise distinct elements and assume that $r = kd+1$.  Express $\{f_j^d\}
$ in terms of the monomial basis $\{\binom {kd}v
x^{kd-v}y^v\}$, obtaining a square matrix of order $kd+1$ whose
entries are polynomials in the variables  $\{\al_{\ell,j}\}$, and whose determinant is
a polynomial $P(\{\al_{\ell,j}\})$. If we 
specialize to $f_j(x,y) = (x + jy)^k$, $1 \le j \le kd+1$, then $\al_{\ell,j} = j^{\ell}$, and 
$\mathcal F^d = \mathcal G^{kd}$ for $\mathcal G = \{x + j y\}$. By Theorem \ref{T1},
$\mathcal G^{kd}$ is linearly independent, hence $P(\{j^\ell\})\neq 0$, and so $P$ is not identically 
zero.  That is, $\mathcal F^d$, generally, is  linearly independent.
\end{proof}

We defer the proofs of Theorem \ref{MT}(5), (6) and (11) until we have
defined synching; (8) will require sections three and four.
 \begin{proof}[Partial Proof  of Theorem \ref{MT}]
 \ 
 
 (1) If $g_j(x,y) = xf_j(x,y)$, then $\sum \la_j f_j^d = 0 \implies \sum \la_j g_j^d = 0$.
 
(2)  This follows from  \eqref{E:pyt} and (1).

(3) As noted in  \eqref{E:scal}, the existence of a $\mathcal W_k(3,d)$ set for $d \ge 3$ would 
imply the existence of a nontrivial identity
\[
f_1^d(x,y) + f_2^d(x,y) = f_3^d(x,y).
\]
After a linear change, we may assume that $f_j(x,y)$ is not a multiple of 
$y^k$. Let $p_j(t) = f_j(t,1)$. Then $p_1^d(t) + p_2^d(t) = p_3^d(t)$, where the $p_j$'s 
are non-constant polynomials. In 1879, Liouville proved that the Fermat equation 
$X^d+Y^d=Z^d$ has  no non-constant solutions over $\mathbb C[t]$ for $d \ge 3$. 
(See  \cite[pp.263--265]{Rib} for a proof.)

(4) More generally, the elements of any $\mathcal W_k(r,d)$ set can be scaled as in 
\eqref{E:scal} so that  $\sum_{j=1}^{r-1} f_j^d(x,y) = f_r^d(x,y)$. Once again, by letting 
$p_j(t) = f_j(t,1)$ and $q_j(t) = f_j(t)/f_r(t)$  we obtain a set of $r-1$ rational functions so that 
$\sum_{j=1}^{r-1} q_j^d(t) = 1$.  A 1984 theorem of Hayman
\cite{H1} says that if $\{\phi_j\}$, $1 \le j \le r-1$, are $r-1$ holomorphic
functions in $n$ complex variables, no two of which are proportional,
and $\sum_{j=1}^{r-1} \phi_j^d = 1$, then $d <  (r-1)^2-1$, so $r > 1 + \sqrt{d+1}$. This
was culmination of the work of Green \cite{G} and others; see \cite[pp.438-440]{GH}
for a clear exposition and history.

(7) The equality for $k=2$ follows from combining (3) with the equations  \eqref{E:cubic}, 
\eqref{E:quartic} and \eqref{E:quintic11}; for $k \ge 3$, apply (1).

(9) Subject to the as-yet unproved (8), this follows from \eqref{E:six} and \eqref{E:seven}.

(10) This follows from  \eqref{E:14}.
\end{proof}

Recall that for an integer $m \ge 2$ and for $s \in \mathbb Z$,
\begin{equation}\label{E:orthog}
\frac 1m \sum_{j=0}^{m-1}\ze_m^{sj} = \begin{cases} 0 & \text{if } m \ \nmid \ s, 
   \\ 1 & \text{if } m\ \mid \ s. \end{cases}
\end{equation}
Synching was introduced in  \cite[\S4]{Re1} and is a generalization of the familiar formulas in which
$\frac12 (f(x,y) \pm f(x,-y))$ give the even and odd parts of $f$.
\begin{theorem}\label{T:synching}
Suppose  $p(x,y) = \sum_{i=0}^k a_i x^{k-i}y^i \in H_k(\cc^2)$.
Then
\begin{equation}\label{E:fullsynch}
 \frac 1m\sum_{j=0}^{m-1} \ze_m^{-rj}p(x,\zeta_m^j y) = 
 \sum_{\substack{i \equiv r \mod m,\\0 \le i \le k}} a_i x^{k-i}y^i.
\end{equation}
\end{theorem}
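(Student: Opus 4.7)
The plan is a direct substitution–and–interchange argument using the orthogonality relation \eqref{E:orthog}. First I would substitute the definition of $p$ into the left-hand side and observe that
\[
p(x,\zeta_m^j y) = \sum_{i=0}^k a_i x^{k-i} (\zeta_m^j y)^i = \sum_{i=0}^k a_i \zeta_m^{ij} x^{k-i} y^i,
\]
so the factor $\zeta_m^{-rj}$ turns this into $\sum_{i=0}^k a_i \zeta_m^{(i-r)j} x^{k-i} y^i$.

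Next, I would swap the two finite sums (which is obviously legitimate) to obtain
\[
\frac{1}{m}\sum_{j=0}^{m-1} \zeta_m^{-rj} p(x,\zeta_m^j y) = \sum_{i=0}^k a_i x^{k-i} y^i \cdot \left(\frac{1}{m}\sum_{j=0}^{m-1} \zeta_m^{(i-r)j}\right).
\]
By \eqref{E:orthog} applied with $s = i-r$, the parenthesized inner sum equals $1$ when $m \mid (i-r)$ (i.e.\ $i \equiv r \pmod{m}$) and $0$ otherwise. Only the terms with $i \equiv r \pmod m$ in the range $0 \le i \le k$ survive, which is precisely the claimed right-hand side.

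There is no real obstacle here; the argument is entirely formal and the only ingredient beyond linearity is the orthogonality identity \eqref{E:orthog}, which is stated just before the theorem. The claim can thus be disposed of in a few lines.
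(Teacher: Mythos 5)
Your argument is correct and is essentially identical to the paper's proof: expand $p(x,\zeta_m^j y)$, interchange the two finite sums, and apply the orthogonality relation \eqref{E:orthog} with $s=i-r$ to the inner sum. Nothing further is needed.
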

\begin{proof}
We expand the left-hand side of \eqref{E:fullsynch}, switch the order of summation:
\[
\frac 1m\sum_{j=0}^{m-1} \ze_m^{-rj}p(x,\zeta_m^j \ y) = 
 \sum_{i=0}^k \left(\frac 1m\sum_{j=0}^{m-1} \ze_m^{-rj}\ze_m^{ij} \right)a_i x^{k-i}y^i,
\]
and then apply \eqref{E:orthog} to the inner sum of $\ze_m^{(i-r)j}$.
\end{proof}

In our applications, $p=f^d$; for example, if $p(x,y) = (x + \al y)^d$, then: 
\begin{equation}\label{E:basicsynch}
\frac 1m\sum_{j=0}^{m-1} \ze_m^{-rj} ( x + \ze_{m}^j\al y)^d = 
\sum_{-\frac rm \le i \le\frac{d-r}m} \binom d{r+im} \al^{r+im}x^{d-r-im}y^{r+im}. 
\end{equation}

 \begin{proof}[Proof of Theorem \ref{MT}(5), (6)]
 We generalize an identity found in Molluzzo's thesis \cite{M} (with
 $\ell=d$) and discussed in \cite[p.485]{NS}; it follows from  \eqref{E:basicsynch} with $r=0$ that 
  \begin{equation}\label{E:Moll}
\sum_{j=0}^{m-1}(x^\ell + \zeta_m^j y^\ell)^d = m \sum_{i=0}^{\lfloor d/m \rfloor} \binom d{im}
x^{\ell d - im\ell}y^{im\ell}.
\end{equation}

Suppose now that $d = ee'$, $\ell = e$ and $m = te'$ is a multiple
of $e'$.  Then the left-hand side of \eqref{E:Moll} is a sum of $m$ $d$-th powers, and since 
$d \ | \ im\ell = itd$, the right-hand side is a sum of $1+\lfloor d/m \rfloor$ $d$-th powers. Thus
the total number of summands is $1 + t\cdot \frac de   + \lfloor \frac et \rfloor$. We choose
$t$ to minimize this sum, obtaining $\Theta_e(d)$. 

Newman and Slater  took $d=e$, so $e' = 1$  (\cite[p.485]{NS}); the
minimum in $\Theta_d(d)$ is found by choosing  
$m \in \{\lfloor \sqrt{d} \rfloor, 1+\lfloor \sqrt{d} \rfloor\}$, giving
$\Phi_d(d) = 1 + \lfloor \sqrt{4d+1} \rfloor$. 

If $e < d$, then $\Theta_e(d)$ is generally larger than $\Theta_d(d)$, 
since some $m$'s are skipped in computing the minimum; however, $\Theta_e(d)$ need not
be monotone in $e$, so Theorem \ref{MT}(1) need not be  be implemented. 
\end{proof}
The first instance of non-monotonicity  in $\Theta_e(d)$ occurs at $d=72$; in general,  
 $\Theta_{8n}(72n^2) = \Theta_{9n}(72n^2) = 1+17n$, but $\Theta_{12n}(72n^2) = 1+18n$. This
 suggests interesting questions in combinatorial number theory which we hope to
 pursue elsewhere. 

 When $d$ is even,  a specialization of   \eqref{E:basicsynch} can be made more 
 symmetric:
\begin{corollary}\label{centralsync}
\begin{equation}\label{E:monom8}
\frac 1{s+1}\cdot \sum_{j=0}^{s} (\ze_{2s+2}^{-j} x + \ze_{2s+2}^{j}y)^{2s} = \binom
{2s}{s}x^sy^{s}.
\end{equation}
\end{corollary}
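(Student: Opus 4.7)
My plan is to reduce \eqref{E:monom8} to a direct application of the synching identity \eqref{E:basicsynch}. First I would pull a common phase out of each summand on the left. Factoring $\ze_{2s+2}^{-j}$ from inside the parenthesis and using $\ze_{2s+2}^{2} = \ze_{s+1}$, one has
\[
(\ze_{2s+2}^{-j} x + \ze_{2s+2}^{j} y)^{2s} = \ze_{2s+2}^{-2sj}(x + \ze_{2s+2}^{2j} y)^{2s} = \ze_{s+1}^{-sj}\bigl(x + \ze_{s+1}^{j} y\bigr)^{2s},
\]
so the left-hand side of \eqref{E:monom8} equals
\[
\frac{1}{s+1}\sum_{j=0}^{s} \ze_{s+1}^{-sj}\bigl(x + \ze_{s+1}^{j} y\bigr)^{2s}.
\]
This is exactly the left-hand side of \eqref{E:basicsynch} specialized to $m = s+1$, $d = 2s$, $\al = 1$, $r = s$; the range $j = 0, \dots, m-1$ matches automatically.

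Next I would simply read off the right-hand side of \eqref{E:basicsynch} under this specialization. The index $i$ must satisfy $-\tfrac{s}{s+1} \le i \le \tfrac{s}{s+1}$, which forces $i = 0$, so exactly one term survives: $\binom{2s}{s}\,x^{s} y^{s}$. This yields \eqref{E:monom8} immediately.

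I do not anticipate a real obstacle here; the statement is essentially a symmetric repackaging of the $r = s$ case of \eqref{E:basicsynch}, arranged so that every linear form on the left has the palindromic shape $\ze^{-j}x + \ze^{j}y$ and the output is the fully symmetric monomial $x^s y^s$. The only points requiring care are the indexing conversion $\ze_{2s+2}^{2} = \ze_{s+1}$ and the observation that, with $m = s+1$ and $d = 2s$, the residue class $r \equiv s \pmod{m}$ isolates precisely the middle binomial coefficient $\binom{2s}{s}$.
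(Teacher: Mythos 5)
Your proof is correct and is essentially the paper's own argument: both reduce \eqref{E:monom8} to \eqref{E:basicsynch} with $m=s+1$, $d=2s$, $r=s$, observe that only $i=0$ survives in the right-hand sum, and use the identity $\ze_{s+1}^{-sj}(x+\ze_{s+1}^j y)^{2s} = (\ze_{2s+2}^{-j}x+\ze_{2s+2}^{j}y)^{2s}$ to pass to the symmetric form (you just perform the rewriting before invoking \eqref{E:basicsynch} rather than after). No gaps.
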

\begin{proof}
Set $r=s$, $d=2s$ and $m = s+1$ in \eqref{E:basicsynch}. Since $|\frac rm| = |\frac{d-r}
m|  < 1$, the summation on the right-hand side has a single term, $i=0$, and \eqref{E:basicsynch}
becomes
\[
\frac1{s+1}\cdot \sum_{j=0}^{s} \ze_{s+1}^{-sj} ( x + \ze_{s+1}^jy)^{2s} = \binom{2s}s x^sy^s;
\]
\eqref{E:monom8} follows from 
$ \ze_{s+1}^{-sj} ( x + \ze_{s+1}^jy)^{2s}=  \ze_{2s+2}^{-2sj}( x +\ze_{2s+2}^{2j}y)^{2s} =
(\ze_{2s+2}^{-j} x + \ze_{2s+2}^{j}y)^{2s}$.
\end{proof}

 \begin{proof}[Proof of Theorem \ref{MT}(11) for even $d$]
Take $(x,y) \mapsto (x^2,y^2)$ in  \eqref{E:monom8}, to obtain
\begin{equation}\label{E:monom}
\sum_{j=0}^{s} (\ze_{2s+2}^{-j} x^2 + \ze_{2s+2}^{j}y^2)^{2s} = (s+1) \binom {2s}{s}(xy)^{2s},
\end{equation}
 a linear dependence among $s+2$ $2s$-th powers of an honest set of
quadratic forms. 
\end{proof}

If $s=2v$, we have $(\ze_{4v+2}^{-j},\ze_{4v+2}^{-j}) = 
((-\zeta_{2v+1}^v)^j,(-\zeta_{2v+1}^{v+1})^j)$, so   
\begin{equation}\label{E:monom2}
\sum_{j=0}^{2v} ((\zeta_{2v+1}^v)^j x^2 +(\zeta_{2v+1}^{v+1})^j y^2)^{4v} = (2v+1)\binom
{4v}{2v}(xy)^{4v}.
\end{equation}
When $s=1$, we have $\ze_2 = -1$ and \eqref{E:monom2} reduces to \eqref{E:pyt};
when $s=2$ and 3,  \eqref{E:monom2} becomes \eqref{E:quartic} and \eqref{E:six}.
Taking $(x,y) \mapsto (e^{-i\theta}(x+iy),e^{i\theta}(x-iy))$ in \eqref{E:monom8}  
(see \cite[(5.8)]{Re2}, which is incorrect -- unfortunately missing the factor of $2^{-2s}$) gives
\begin{equation}\label{E:kent2}
\frac 1{s+1}\sum_{j=0}^s
\left(\cos(\tfrac{j\pi}{s+1}+ \theta) x +  \sin(\tfrac{j\pi}{s+1}+
  \theta) y\right)^{2s} = \frac 1{2^{2s}}\binom {2s}s (x^2+y^2)^s, \quad \theta \in \cc.
\end{equation}
With $\theta \in \mathbb R$, \eqref{E:kent2} was a 19th century quadrature formula; 
see the discussion after \cite[Cor.5.6]{Re2} for details. 
Taking $\theta \in \mathbb R$  and $(x,y) \mapsto (x^2 - y^2, 2 x y)$, so that 
$x^2+y^2 \mapsto (x^2 + y^2)^2$ in \eqref{E:kent2}, gives a nice family of $\mathcal 
W_2(s+2,2s)$ cousins in $\mathbb R[x,y]$.

There doesn't seem to be such a simple proof of Theorem \ref{MT}(11) for odd $d$, and we need 
to introduce powers of trinomials as summands.  More generally, it is useful to 
present two quadratic cases, which are 
corollaries of Theorem \ref{T:synching}; note that
\[
\zeta_m^{-rj}(\zeta_m^{-j} x^2 + \al x y + \zeta_m^{j} y^2)^d = \zeta_m^{-(r+d)j}
(x^2 + \al \zeta_m^{j}x y + \zeta_m^{2j} y^2)^d,
\]
gives \eqref{E:quadsynch} the shape of  Theorem \ref{T:synching}  
for $p(x,y) = (x^2 + \al x y + y^2)^d$.

\begin{corollary}\label{T:quadsynching}
Suppose $d, m \in \nn, v \in \zz$ and $\al \in \mathbb C$. Let
\begin{equation}\label{E:quadsynch}
\Psi(v,m,d;\al):= \frac 1m \sum_{j=0}^{m-1} \zeta_m^{-vj}
(\zeta_m^{-j} x^2 + \al x y + \zeta_m^{j} y^2)^d. 
\end{equation}
(i) If $m > d$, then 
\begin{equation}\label{E:quadsynch1}
\Psi(0,m,d;\al) = 
\left( \sum_{r=0}^{\lfloor d/2 \rfloor} \frac{d!}{(r!)^2(d-2r)!}\al^{d-2r}\right) x^dy^d.
\end{equation}

(ii) If $2m > d \ge m$, then 
\begin{equation}\label{E:quadsynch2}
\begin{gathered}
\Psi(0,m,d;\al) = \left( \sum_{r=0}^{\lfloor d/2 \rfloor} \frac{d!}{(r!)^2(d-2r)!}\al^{d-2r}\right)  x^dy^d \\
+ \left( \sum_{r=0}^{\lfloor (d-m)/2 \rfloor} \frac{d!}{r!(r+m)!(d-m-2r)!}\al^{d-m-2r}  \right) 
(x^{d+m}y^{d-m} + x^{d-m}y^{d+m}). 
\end{gathered} 
\end{equation}
\end{corollary}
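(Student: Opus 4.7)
The plan is to prove both parts by directly expanding the inner trinomial using the multinomial theorem and then applying the orthogonality relation \eqref{E:orthog} to identify which monomials survive.

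First I would expand
\[
(\zeta_m^{-j} x^2 + \al x y + \zeta_m^{j} y^2)^d = \sum_{\substack{a+b+c=d\\ a,b,c \ge 0}} \frac{d!}{a!\,b!\,c!}\, \al^b\, \zeta_m^{(c-a)j}\, x^{2a+b}\, y^{2c+b}.
\]
Setting $k := c - a$, I note that $2a+b = d-k$ and $2c+b = d+k$, so every monomial that appears has the form $x^{d-k} y^{d+k}$ with $-d \le k \le d$. Substituting this expansion into
$\Psi(0,m,d;\al) = \frac{1}{m}\sum_{j=0}^{m-1}(\cdots)^d$
and interchanging the order of summation yields
\[
\Psi(0,m,d;\al) = \sum_{\substack{a+b+c=d}} \frac{d!}{a!\,b!\,c!}\,\al^b\, x^{d-k}y^{d+k} \cdot \Bigl(\tfrac{1}{m}\sum_{j=0}^{m-1}\zeta_m^{kj}\Bigr),
\]
and by \eqref{E:orthog} the inner geometric sum vanishes unless $m \mid k$.

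For part (i), under the hypothesis $m > d$, the condition $m \mid k$ combined with $|k| \le d < m$ forces $k = 0$, so $c = a$ and $b = d - 2a$ with $a$ ranging over $0 \le a \le \lfloor d/2 \rfloor$. Collecting the surviving terms with $r = a$ immediately gives \eqref{E:quadsynch1}. For part (ii), under $2m > d \ge m$ the values of $k$ with $|k| \le d$ and $m \mid k$ are precisely $k \in \{-m, 0, m\}$ (since $|\pm 2m| > d$). The $k=0$ terms produce the $x^d y^d$ contribution as in (i). For $k = m$ I set $a = r$, $c = r + m$, $b = d - m - 2r$ with $0 \le r \le \lfloor (d-m)/2\rfloor$, producing the coefficient $\frac{d!}{r!(r+m)!(d-m-2r)!}\al^{d-m-2r}$ attached to $x^{d-m}y^{d+m}$. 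The case $k = -m$ is symmetric with $a \leftrightarrow c$, contributing the identical coefficient attached to $x^{d+m}y^{d-m}$. Summing the three contributions gives \eqref{E:quadsynch2}.

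There is no serious obstacle here — the argument is purely a matter of careful bookkeeping: reindexing the multinomial expansion so that the $j$-dependence collapses to a single root-of-unity sum, and then verifying which residues of $k \pmod m$ are attainable under the size constraint $|k|\le d$. The only small point requiring attention is checking that the range of $r$ matches the nonnegativity of $b = d - m - 2r$, which is exactly $r \le \lfloor (d-m)/2 \rfloor$.
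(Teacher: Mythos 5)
Your proof is correct and follows essentially the same route as the paper: a trinomial (multinomial) expansion followed by the root-of-unity orthogonality relation \eqref{E:orthog} to filter the surviving monomials, with the same bookkeeping of $k=c-a\in\{-m,0,m\}$ and the range check $r\le\lfloor(d-m)/2\rfloor$. The only cosmetic difference is that the paper first recasts $\Psi$ as a synching of $(x^2+\al xy+y^2)^d$ and cites Theorem \ref{T:synching}, whereas you apply the orthogonality sum directly inside the expansion.
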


\begin{proof}

By the trinomial theorem, 
\[
(x^2 + \al x y + y^2)^d = \sum_{r+s+t = d} \frac{d!}{r!s!t!} \al^sx^{2r+s}y^{s+2t};
\]
note that $(2r+s,s+2t) = (2d-i, i) \iff r - t = d-i$; all sums can only be taken over $r,s,t \ge 0$.
In each case, $m$ is relatively large compared to $d$ and very few terms will be nonzero. In
(i), $x^{2d-i}y^i$ appears when $i \equiv d \mod m$. 
Since $d < m$, this only occurs when $i=d$, so $r=t$ and the 
coefficient of $x^dy^d$ is found by summing $\frac{d!}{r!s!t!}\al^s$ over
the set  $\{(r,s,t) = (r, d-2r, r)\}$. Similarly, in (ii), $v=0$ and $2m > d$, so we have 
three cases: $r-t \in\{ -m, 0, m\}$, and the terms sum as indicated. 
\end{proof}

We use \eqref{E:quadsynch2}  when $d-m \ge 2$ by choosing $\al=\al_0$ to be a non-zero
root of the polynomial coefficient of $(x^{d+m}y^{d-m} + x^{d-m}y^{d+m})$,
so that the terms on both sides of the expression are $d$-th 
powers. 
In general, the Klein set of $\Psi(v,m,d;\al)$ will consist of two parallel regular $m$-gons, whose
altitude and relative orientation depends on $\al$. If $(xy)^d$ appears in the identity, then the
two poles are added. 

 \begin{proof}[Proof of Theorem \ref{MT}(11) for odd $d$]
Suppose $d=2s+1 \ge 5$. We have
\begin{equation}\label{E:Main(9)}
\begin{gathered}
\Psi(0,s+1,2s+1;\al) = 
\sum_{j=0}^{s} (\zeta_{s+1}^{-j} x^2 + \al x y + \zeta_{s+1}^{j} y^2)^{2s+1}  =\\
A_s(\al)x^{3s+2}y^s  + B_s(\al)x^{2s+1}y^{2s+1}+  A_s(\al)x^sy^{3s+2} ; \\
A_s(\al) =  \binom{2s+1}{s} \al^s + (2s+1)\binom{2s}{s-2}\al^{s-2} + \dots.
\end{gathered}
\end{equation}
Let $\al = \al_0$ be a non-zero root of $A_s(\al)$; this exists because $s \ge 2$, so
 \eqref{E:Main(9)} becomes
\[
\Psi(0,s+1,2s+1;\al_0) = B(\al_0)(xy)^{2s+1},
\]
which is a sum of $s+1$ $(2s+1)$-st powers equal to another $(2s+1)$-st power. 
 \end{proof}
\begin{proof}[Alternate Proof of Theorem \ref{MT}(11) for $d = 2s, s \ge 3$]
Suppose $s \ge 3$. Then
\begin{equation}\label{E:Main(92)}
\begin{gathered}
\Psi(0,s+1,2s;\al) =
\tilde A_s(\al)(x^{3s+1}y^{s-1}+ x^{s-1}y^{3s+1}) + \tilde B_s(\al)x^{2s}y^{2s}; \\
\tilde A_s(\al) =  \binom{2s}{s-1} \al^{s-1} + (2s)\binom{2s-1}{s-3}\al^{s-3} + \dots
\end{gathered}
\end{equation}
Again, choose $\al= \al_0$ to be a non-zero root of $\tilde A_s$.
\end{proof}
By looking at the pattern of linear dependence among the elements, 
it is not hard to show that the families in  \eqref{E:monom} and \eqref{E:Main(92)} are not cousins.

Here are other synching examples; \eqref{E:quadsynch1} 
 requires $m > d$. We have $\Psi(0,4,3;\al) = (\al^3+6\al)x^3y^3$, 
so $\Psi(0,4,3,\sqrt{-6})$ gives a $\mathcal W_2(4,3)$ set. 
In (ii) we need $d \in [m+2,2m)$. For $m=3$, this implies that  $d=5$, and we 
obtain a variant of \cite[(4.12)]{Re1}:
\begin{equation}\label{E:quinticsyn1}
\begin{gathered}
3\Psi(0,3,5;\al)  = \sum_{j=0}^2 (\om^k x^2 + \al x y + \om^{-k}y^2)^5\\  =
15(1+2\al^2)(x^8y^2 + x^2y^8) + 3\al(\al^4 + 20\al^2+30) x^5y^5 \implies\\
\Psi\left(0,3,5;\sqrt{-1/2}\right) = \bigl(\sqrt{-9/2}\ xy\bigr)^5.
\end{gathered}
\end{equation}
The linear change 
 $(x,y) \mapsto (\sqrt{-2}\ x -(1 + \sqrt{3})y,  -(1 + \sqrt{3})x + \sqrt{-2}\ y)$,
applied to \eqref{E:quinticsyn1}, gives $3(1+\sqrt{3})$ times a flip of \eqref{E:quintic11}.
The Klein set here is again a cube, rotated so the vertices are the two poles and antipodal
equilateral triangles at $z = \pm\frac 13$.

For $m=4$, the possibilities are $d=6,7$; we have
\[
4\Psi(0,4,6; \sqrt{-2/5})=\sum_{k=0}^3 (i^{-k}  x^2 +\sqrt{-2/5}\ xy +i^{k}y^2)^6 =
11\cdot(\sqrt{-8/5}\ x y)^6; 
\]
$\Psi(0,4,7; \sqrt{-6/5})$ is just \eqref{E:seven}.

Two other examples show the range of Corollary
 \ref{T:quadsynching}. First,
\[
\begin{gathered}
4\Psi(2,4,4;\al) = \sum_{j=0}^3 (-1)^k(i^{-k} x^2 + \al x y + i^{k}y^2)^4 = 8(2 + 3\al^2) 
(x^6 y^2 + x^2 y^6).\end{gathered}
\]
On taking $\al = \al_0 = \sqrt{-2/3}$, transposing two terms to get two equal sums of two 
fourth powers, and after multiplying  through by $\sqrt{3}$, we obtain \eqref{E:quartic12}.
For $d=5$, we may recover \eqref{E:quintic11} as $4\Psi(2,4,5,\sqrt{-2})$ from 
\[
4\Psi(2,4,5;\al) = \sum_{j=0}^3 (-1)^k (i^{-k} x^2 + \al x y + i^{k}y^2)^5 =
40\al(2+\al^2)(x^7 y^3 + x^3 y^7). 
\]

An unusual phenomenon occurs with $\Psi(0,5,14; \al)$: by the general method,
\[
\Psi(0,5,14; \al) = A(\al)(x^{24}y^4 + x^4y^{24}) + B(\al)(x^{19}y^9 + x^9y^{19}) + C(\al)x^{14}y^{14}. 
\]
It turns out that $A(\al)$ and $B(\al)$ have a common factor $1+\al^2$. Upon setting $\al = i$,
we obtain \eqref{E:14}. A computer search has not found other examples of this phenomenon.
As noted earlier, the Klein form of \eqref{E:14} is an icosahedron, but  
an icosahedron
can be rotated so that its vertices lie in four horizontal equilateral triangles. This suggests that 
\eqref{E:quintic11} should be the cousin of a union of two $\Psi(v,3,14;\al)$'s. Indeed,
with $\phi = \frac{1+\sqrt 5}2$ as usual,
\begin{equation}\label{E:142}
\sum_{k=0}^2 (\om ^k x^2 + \phi^2 x y - \om^{-k}y^2)^{14} + 
\sum_{k=0}^2 (\om ^k \phi  x^2 - \phi^{-1} x y -
 \om^{-k}\phi y^2)^{14} = 0.
\end{equation} 
The Sch\"onemann coefficients of the icosahedron, 
$\{(\phi^2+1)^{-1/2}\cdot(\pm \phi, \pm 1, 0)\}$  and their cyclic 
images, lead to yet another cousin of \eqref{E:quintic11}: 
\begin{equation}\label{E:143}
\begin{gathered}
(x^2 + 2\phi x y - y^2)^{14} + (x^2 - 2\phi  x y - y^2)^{14}  
+ ( (\phi + i)(x^2 - \tfrac{1-2i}{\sqrt 5} y^2))^{14} \\ + ( (\phi- i)(x^2 - \tfrac{1+2i}{\sqrt 5} y^2))^{14} 
= (\phi x^2 + 2 i x y + \phi y^2)^{14} + (\phi x^2 - 2 i x y + \phi y^2)^{14}.
\end{gathered}
\end{equation}
The corresponding quadratics for a dodecahedron, alas,  give a $\mathcal W_2(10,14)$ set.

There is no reason for synching to be limited to trinomials. Here is an example of a
$\mathcal W_4(4,3)$ family of linearly independent elements:
\begin{equation}\label{E:quarcube}
\sum_{k=0}^3 (-1)^k(x^4 + i^k\sqrt{6}\ x^3y -6i^{2k}x^2y^2 -\sqrt{6} \ i^{3k} x y^3 + y^4)^3 = 0;
\end{equation}
the quartics are linearly independent.

Finally, we compare Theorem \ref{MT}(5), (6) and (11). The bound in (11) is linear in $d$ and 
weaker than (5). This leads to  the natural question: what is the smallest $d$ so that $k \ge 2$ 
and $\Phi_{k+1}(d) < \Phi_{k}(d)$?   Taking Theorem \ref{MT}(7), (10) and (11), into account, we 
must have $d \ge 6$, and the smallest $d$  for which (5) or (6) beats the bound for $k=2$ in (11) is 
$d=15$:  $1 + \lfloor \sqrt{61} \rfloor = 8 < 9 = 2 + \lfloor   15/2\rfloor$.

\section{Overview of $\mathcal W_2(4,d)$ sets and tools.}

In order to prove Theorem \ref{MT}(8), we need an abbreviated version of Sylvester's algorithmic 
theorem from 1851 on the representation of forms as a sum of powers of linear forms. We refer the 
reader to \cite[Thm.2.1]{Re2} for the general theorem and proof.

\begin{theorem}[After Sylvester]\label{T:syl}
Suppose $d \ge 3$ and
\begin{equation}\label{E:squaring}
p(x,y) = \sum_{j=0}^d \binom dj a_j x^{2d-2j}y^{2j}, \qquad 
q(x,y) = \sum_{j=0}^d \binom dj a_j x^{d-j}y^{j}.
\end{equation}
Then $p$ is a sum of $d$-th powers of two honest even quadratic forms 
if and only if  there 
exists a non-square quadratic form
$h(u,v) = c_0u^2 + c_1uv + c_2v^2 \neq 0$ so that
\begin{equation}\label{E:rank2}
\begin{pmatrix}
a_0 & a_1 & a_2 \\
a_1 & a_2 & a_3\\
\vdots & \vdots  & \vdots \\
a_{d-2}& a_{d-1} & a_d
\end{pmatrix}
\cdot
\begin{pmatrix}
c_0\\c_1\\ c_2
\end{pmatrix}
=\begin{pmatrix}
0\\0 \\\vdots \\ 0
\end{pmatrix}.
\end{equation}
\end{theorem}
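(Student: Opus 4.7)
The plan is to reduce this statement to the classical form of Sylvester's theorem (which the authors cite as \cite[Thm.2.1]{Re2}) via the substitution that trades even quadratics for linear forms.

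First I would observe that any honest even quadratic takes the shape $f_i(x,y) = \al_i x^2 + \be_i y^2$, and so $f_i(x,y)^d = g_i(x^2,y^2)^d$ where $g_i(u,v) := \al_i u + \be_i v$ is linear in the new variables $u,v$. Two even quadratics $f_1,f_2$ are honest (non-proportional) if and only if the corresponding linear forms $g_1,g_2$ are non-proportional. Expanding $(\al_i u + \be_i v)^d$ by the binomial theorem, the identity $p=f_1^d+f_2^d$ forces the coefficients $a_j$ in \eqref{E:squaring} to satisfy $a_j = \al_1^{d-j}\be_1^j + \al_2^{d-j}\be_2^j$, and in turn $q(u,v) = g_1(u,v)^d + g_2(u,v)^d$. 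Conversely, any representation of $q$ as $g_1^d+g_2^d$ with linear $g_i(u,v) = \al_i u + \be_i v$ pulls back under $(u,v) \mapsto (x^2,y^2)$ to a representation of $p$ as a sum of two $d$-th powers of even quadratics. Thus it is equivalent to characterize when the binary form $q$ of degree $d$ is a sum of $d$-th powers of two honest linear forms.

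Next I would invoke the classical Sylvester result in the rank-two case: such a $q$ is $g_1^d + g_2^d$ (with $g_1,g_2$ distinct) if and only if there is a non-square quadratic $h(u,v) = c_0 u^2 + c_1 uv + c_2 v^2$ apolar to $q$, which is exactly the linear system \eqref{E:rank2}. For a self-contained sketch of the two directions: the forward direction is a direct computation. Writing $h(u,v) = (\be_1 u - \al_1 v)(\be_2 u - \al_2 v)$, so that $(c_0,c_1,c_2) = (\be_1\be_2,\,-(\al_1\be_2+\al_2\be_1),\,\al_1\al_2)$, each entry of the product in \eqref{E:rank2} splits as
\[
\al_i^{d-k-2}\be_i^k\bigl[\be_1\be_2\,\al_i^2 - (\al_1\be_2+\al_2\be_1)\al_i\be_i + \al_1\al_2\,\be_i^2\bigr] \quad (i=1,2),
\]
and each bracket vanishes identically, so $h$ kills each row. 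The non-square condition corresponds precisely to the two roots being distinct, i.e.\ $g_1,g_2$ honest.

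For the converse, I would use a dimension count. The $(d-1) \times 3$ catalecticant matrix in \eqref{E:rank2} has rank at most $3$, so its right kernel has dimension $\ge 0$; dually, the space $V_h$ of all forms $q$ of degree $d$ whose coefficients $a_j$ satisfy \eqref{E:rank2} for this specific $h$ has dimension at least $2$ (it is cut out of $H_d(\cc^2)$ by $d-1$ linear relations). Factoring the non-square $h = (\be_1 u - \al_1 v)(\be_2 u - \al_2 v)$ with distinct roots produces honest linear forms $g_1,g_2$, and the forward direction shows $g_1^d,\,g_2^d \in V_h$. Since $g_1,g_2$ are non-proportional, $g_1^d$ and $g_2^d$ are linearly independent (for instance by Theorem \ref{T1}), and a short Vandermonde-type argument shows that $\dim V_h = 2$ exactly. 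Therefore $q \in V_h = \mathrm{span}(g_1^d, g_2^d)$, giving the desired representation (after absorbing constants). The main obstacle in carrying out this sketch is pinning down $\dim V_h = 2$ without reproducing the full Sylvester machinery; however, since the authors explicitly cite \cite[Thm.2.1]{Re2} for the general statement, the cleanest write-up is to perform the reduction in the first paragraph and then invoke Sylvester directly.
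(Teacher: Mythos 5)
Your proposal is correct and takes essentially the same route as the paper's sketch: both first trade even quadratics for linear forms via $(u,v)\mapsto(x^2,y^2)$, reducing the claim to the classical Sylvester criterion for $q$ to be a sum of $d$-th powers of two distinct linear forms. Your apolarity/dimension count for the converse is a repackaging of the paper's observation that the general solution of the second-order linear recurrence encoded by \eqref{E:rank2} is $a_j=\la_1\ga_1^j+\la_2\ga_2^j$ with $\ga_1,\ga_2$ the distinct roots of $h$ (the paper, like you, treats the root-at-infinity case $h(u,v)=u(\ga_1u-v)$ separately), so nothing further is needed.
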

\begin{proof}[Sketch of Proof]
A comparison of the coefficients of monomials in $p$ and $q$ shows that 
\[
\begin{gathered}
p(x,y) =  (\al_1 x^2 + \be_1 y^2)^d +  (\al_2 x^2 + \be_2 y^2)^d    \iff \\
q(x,y) =  (\al_1 x + \be_1 y)^d +  (\al_2 x + \be_2 y)^d.
\end{gathered}
\]
Assuming $\al_j \neq 0$, $q(x,y) = (\al_1 x + \be_1 y)^d +  (\al_2 x + \be_2 y)^d$
implies that $a_j = \la_1 \ga_1^j + \la_2 \ga_2^j$, 
where $\la_i = \al_i^d$ and $\ga_i = \be_i/\al_i$, so $(a_j)$ satisfies the 
linear recurrence given by
\eqref{E:rank2} with $c_0 = \ga_1\ga_2, c_1 = -(\ga_1+\ga_2), c_2 = 1$; $h(u,v) =
(\ga_1u - v)(\ga_2u - v)$. Conversely, any solution $(a_j)$ to this recurrence has the
 indicated shape. If $\al_2=0$, then $\al_1 \neq 0$ by honesty; $a_j = \la_1 \ga_1^j $ for $j \le d-1$ 
and \eqref{E:rank2} holds with $h(u,v) = u(\ga_1 u - v) $.
\end{proof}
The matrix in  \eqref{E:rank2} is called the {\it 2-Sylvester matrix} for $p$ (or $q$).   
A necessary condition for $p$ to be a sum of two $d$-th powers is that the 2-Sylvester matrix of 
$p$ (with $d-2$ rows) has  rank $\le 2$. As $d$ increases, this becomes 
increasingly harder.

We also need a special case of a classical result about simultaneous
 diagonalization; there doesn't seem to be an easy-to-find modern proof.  
 
 \begin{theorem}[Diagonalization]\label{T:diag}
If $f_1$ and $f_2$ are relatively prime binary quadratic forms,
then there is a linear change $M$ so that $f_1\circ M$ and $f_2\circ M$ are both even. 
\end{theorem}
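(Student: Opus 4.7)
The plan is to argue via the pencil $\mathcal{P} = \cc f_1 + \cc f_2$ of binary quadratic forms. The key observation is that a binary quadratic form $g = ax^2 + bxy + cy^2$ is a perfect square (of a linear form, up to scalar) if and only if its discriminant $\De(g) := b^2 - 4ac$ vanishes. The first step is to show that under the coprimality hypothesis, $\mathcal{P}$ contains exactly two singular members, corresponding to two non-proportional linear forms $L_1$ and $L_2$. Once this is done, the linear change $M$ sending $L_1 \mapsto x$ and $L_2 \mapsto y$ places both $f_1$ and $f_2$ in $\mathrm{span}(x^2, y^2)$, hence makes each one even.

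To verify the structural claim about $\mathcal{P}$, write $f_i = a_i x^2 + b_i xy + c_i y^2$. A direct expansion gives
\[
\De(\lambda f_1 + \mu f_2) = D_1 \lambda^2 + 2 E \lambda \mu + D_2 \mu^2,
\]
where $D_i = \De(f_i)$ and $E = b_1 b_2 - 2 a_1 c_2 - 2 a_2 c_1$. This is itself a binary quadratic form in $(\lambda,\mu)$, and its own discriminant $4(E^2 - D_1 D_2)$ is, by a short calculation on the root factorizations of $f_1$ and $f_2$ over $\cc$, a nonzero constant multiple of the resultant $\mathrm{Res}(f_1, f_2)$. By coprimality this resultant is nonzero, so the form in $(\lambda,\mu)$ has two distinct zeros $(\lambda_j : \mu_j)$, $j = 1, 2$, in $\pp^1$, yielding two perfect-square members $\lambda_j f_1 + \mu_j f_2 = L_j^2$ of $\mathcal{P}$. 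Moreover $L_1$ and $L_2$ cannot be proportional: otherwise $L_1^2$ and $L_2^2$ would be proportional and the two points $(\lambda_j : \mu_j)$ would coincide in $\pp^1$.

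Since $L_1$ and $L_2$ span the space of linear forms, there is a unique linear change $M$ with $L_1 \circ M = x$ and $L_2 \circ M = y$. The two linearly independent elements $L_1^2, L_2^2$ form a basis of the two-dimensional pencil $\mathcal{P}$, so each of $f_1$ and $f_2$ is a $\cc$-linear combination of $L_1^2, L_2^2$, and therefore $f_i \circ M \in \mathrm{span}(x^2, y^2)$, which is exactly the space of even binary quadratic forms. The one genuine obstacle in this outline is the identification of $E^2 - D_1 D_2$ with the resultant $\mathrm{Res}(f_1, f_2)$ up to a nonzero scalar; this can be dispatched either by a direct Vieta-style computation on the factorizations $f_i = a_i(x - \al_i y)(x - \be_i y)$, or, more conceptually, by noting that both quantities are bi-homogeneous of the same bi-degree in the coefficients of $(f_1, f_2)$ and vanish on the same irreducible hypersurface.
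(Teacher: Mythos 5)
Your proof is correct, and it takes a genuinely different route from the paper. The paper proves this by first splitting into cases on rank: if both forms are squares of (necessarily distinct) linear forms, a linear change sends those linear forms to $x$ and $y$; otherwise it normalizes $f_1$ to $x^2+y^2$ and then applies a complex ``rotation'' $M_z:(x,y)\mapsto(\cos z\, x+\sin z\, y,\,-\sin z\, x+\cos z\, y)$, which fixes $f_1$ and kills the $xy$-coefficient of $f_2$ for a suitable $z$; the coprimality hypothesis enters to guarantee $a\pm ib-c\neq 0$, so that the required value $\tan 2z=-b/(a-c)$ avoids the excluded values $\pm i$ of the complex tangent. You instead work with the pencil $\cc f_1+\cc f_2$ and locate its two degenerate (perfect-square) members via the discriminant $D_1\la^2+2E\la\mu+D_2\mu^2$; the key identity $E^2-D_1D_2=4\,\mathrm{Res}(f_1,f_2)$, which you flag as the one point needing verification, does check out by direct expansion (with $\mathrm{Res}(f_1,f_2)=(a_1c_2-a_2c_1)^2-(a_1b_2-a_2b_1)(b_1c_2-b_2c_1)$), and the rest of your argument is sound: coprimality forces $f_1,f_2$ to be linearly independent, so the two independent squares $L_1^2,L_2^2$ span the pencil and the change $L_1\mapsto x$, $L_2\mapsto y$ diagonalizes both forms. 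Your approach buys symmetry in $f_1,f_2$, avoids the paper's case split and the small subtlety about the range of $\tan$, and identifies the diagonalizing coordinates intrinsically (as the components of the two degenerate conics of the pencil), making transparent that the construction fails exactly when the resultant vanishes; the paper's argument is more elementary and explicitly constructive, producing the change of variables as a single one-parameter substitution. Either way the result stands.
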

\begin{proof}
Suppose without loss of generality that $rank(f_1) \ge
rank(f_2)$. If $rank(f_1) = 1$, then $(f_1,f_2) = (\ell_1^2,\ell_2^2)$ and a linear change
takes $(\ell_1,\ell_2) \mapsto (x,y)$. Otherwise, there exists $M_1$ so that
$(f_1\circ M_1)(x,y) = x^2 + y^2$ and 
$(f_2\circ M_1)(x,y) = ax^2 + bxy + cy^2$. Since these are relatively prime, $a  \pm ib - c \neq 0$. 

Drop ``$M_1$", and observe that for any $z \in \mathbb C$, $f_1$ is fixed by any 
orthogonal  linear change $M_z: (x,y) \mapsto   ((\cos z) x + (\sin z) y, - (\sin z) x +( \cos z) y)$,
under which the coefficient of $xy$ in $f_2\circ M_z$ is
$  (a - c)\sin 2z + b\cos 2z$. If $a=c$, let $z = \frac{\pi}4$. Otherwise, choose $z$ so 
that $\tan 2z = -\frac b{a-c}$; this is possible, since the range of $tan(z)$ is $\cc \setminus\{\pm i\}$. 
The coefficient of $xy$ in $f_2\circ M_z$ vanishes, so $f_1\circ M_z, f_2\circ M_z$ are
both even. \end{proof}

Suppose $d \ge 3$ and we have a $\mathcal W_2(4,d)$ set, flipped and normalized so that
\begin{equation}\label{E:main}
p(x,y) = f_1^d(x,y) + f_2^d(x,y) =  f_3^d(x,y) + f_4^d(x,y),
\end{equation}
for an honest set $\{f_1,f_2,f_3,f_4\}$ of binary quadratic forms.

\begin{theorem}\label{T:even}
If \eqref{E:main} holds, then there exists a linear change after which both $f_1$ and $f_2$ 
are even, so $p$ is even. We have $\gcd(f_1,f_2) = \gcd(f_3,f_4) = 1$, but it is not true
 that $f_3,f_4$ are both even.
\end{theorem}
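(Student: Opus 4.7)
The plan is a three-step strategy. First, I would establish the coprimality assertions $\gcd(f_1,f_2) = \gcd(f_3,f_4) = 1$; next, invoke Theorem \ref{T:diag} to produce a linear change $M$ that simultaneously diagonalizes $f_1$ and $f_2$, so that $p \circ M$ is automatically even; and finally, use Theorem \ref{T1} to rule out the possibility that $f_3 \circ M$ and $f_4 \circ M$ are both even as well.

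For coprimality, I would argue by contradiction. Suppose $f_1, f_2$ share a linear factor; after a preliminary linear change we may take it to be $y$, so $f_1 = y g_1$ and $f_2 = y g_2$ with $g_1, g_2$ linear and non-proportional (else $f_1 \propto f_2$, contradicting honesty). Then $y^d \mid p$. On the other hand, the factorization $p = f_3^d + f_4^d = \prod_{k=0}^{d-1} (f_3 - \eta_k f_4)$, where $\eta_k$ ranges over the $d$ roots of $z^d = -1$, lets us track the $y$-multiplicity on the other side. Writing $f_3(x,0) = a x^2$ and $f_4(x,0) = b x^2$, the factor $f_3 - \eta_k f_4$ is divisible by $y$ only when $a = \eta_k b$. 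If $(a,b) \neq (0,0)$ this pins down at most one value of $k$, and since each factor is a binary quadratic, $y$ can contribute multiplicity at most $2$ to $\prod_k(f_3 - \eta_k f_4)$, contradicting $d \ge 3$. Hence $a = b = 0$, so $y \mid f_3$ and $y \mid f_4$; writing $f_3 = y h_3$ and $f_4 = y h_4$ and cancelling $y^d$ leaves $h_3^d + h_4^d = g_1^d + g_2^d$, a nontrivial dependence among the $d$-th powers of four linear forms. Honesty of $\{f_1,f_2,f_3,f_4\}$ propagates to pairwise non-proportionality of $\{g_1,g_2,h_3,h_4\}$, and Theorem \ref{T1} (with $r = 4 \le d+1$) forbids such a dependence. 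The proof of $\gcd(f_3,f_4) = 1$ is identical after swapping the two pairs.

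With coprimality in hand, Theorem \ref{T:diag} produces a linear change $M$ such that $f_1 \circ M$ and $f_2 \circ M$ are both even, whence $p \circ M$ is also even, proving the first assertion. For the final claim, suppose toward contradiction that $f_3 \circ M$ and $f_4 \circ M$ are even as well. Dropping ``$\circ M$'' and writing $f_i = \al_i x^2 + \be_i y^2$, the identity \eqref{E:main} becomes, upon substituting $(u,v) = (x^2, y^2)$,
\[
(\al_1 u + \be_1 v)^d + (\al_2 u + \be_2 v)^d = (\al_3 u + \be_3 v)^d + (\al_4 u + \be_4 v)^d
\]
in $\cc[u,v]$. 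The four linear forms $\al_i u + \be_i v$ inherit pairwise non-proportionality from the $f_i$, so Theorem \ref{T1} (again $4 \le d+1$) forces their $d$-th powers to be linearly independent, contradicting the nontrivial relation displayed above. The main obstacle is the $y$-multiplicity bookkeeping in the coprimality step---in particular, securing the observation that, absent the degenerate case $y \mid f_3, f_4$, a single quadratic factor can only absorb two copies of $y$; once this is in place, the remaining parts of the proof are essentially formal applications of the two cited theorems.
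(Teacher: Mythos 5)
Your proposal is correct and follows essentially the same route as the paper: the same multiplicity count on the factors of $f_3^d+f_4^d$ to force any common linear factor of $f_1,f_2$ into both $f_3$ and $f_4$ (reducing to a forbidden relation among four linear $d$-th powers via Theorem \ref{T1}), then Theorem \ref{T:diag} to diagonalize, and Theorem \ref{T1} again to exclude $f_3,f_4$ both being even. Your bookkeeping with the roots of $z^d=-1$ is if anything slightly more explicit than the paper's, but the argument is the same.
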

\begin{proof}
If $\gcd(f_1,f_2) =\ell$ for a linear form $\ell$, so that $f_1 = \ell \ell_1$ and 
$f_2 = \ell \ell_2$, then
\[
\ell^d \ | \  f_3^d + f_4^d = \prod_{k=0}^{d-1} (f_3 + \zeta_d^k f_4).
\]
Since $ d \ge 3$, $\ell$ must divide at least two different quadratic factors on the right, 
say $\ell \ | \ f_3 + \zeta_d^{k_1} f_4, f_3 + \zeta_d^{k_2} f_4$ for $k_1 \neq k_2$. This 
implies that $\ell \ | \ f_3, f_4$ and $f_3= \ell \ell_3$ and $f_4 = \ell\ell_4$ for linear 
$\ell_3, \ell_4$. hence we can factor $\ell^d$ from \eqref{E:main} to obtain
$\ell_1^d + \ell_2^d = \ell_3^d + \ell_4^d$,
which contradicts Theorem \ref{T1}, since $d \ge 3$. Similarly, $\gcd(f_3,f_4) = 1$.

Thus $f_1$ and $f_2$ are relatively prime, and by Theorem \ref{T:diag}, we may 
simultaneously diagonalize them, after which (dropping $M$),
\[
p(x,y) = (\al_1 x^2 + \be_1 y^2)^d + (\al_2 x^2 + \be_2 y^2)^d = f_3^d(x,y) + f_4^d(x,y).
\]
Suppose $f_3(x,y) = \al_3 x^2 + \be_3 y^2$ and $f_4(x,y) = \al_4 x^2 + \be_4 y^2$ are 
both even. Then
\begin{equation}\label{E:shrink}
\begin{gathered}
(\al_1 x^2 + \be_1 y^2)^d + (\al_2 x^2 + \be_2 y^2)^d = (\al_3 x^2 + \be_3 y^2)^d 
+ (\al_4 x^2 + \be_4 y^2)^d \\
\implies (\al_1 x + \be_1 y)^d + (\al_2 x + \be_2 y)^d = (\al_3 x + \be_3 y)^d 
+ (\al_4 x + \be_4 y)^d. 
\end{gathered}
\end{equation}
Since $\{f_j\}$ is honest, \eqref{E:shrink}
violates Theorem \ref{T1},  so $f_3$ and $f_4$ are not both even.
\end{proof}

Here then is our strategy. We seek to find all pairs $\{f_3,f_4\}$ which are not both even 
but for which $f_3^d + f_4^d$ {\it is} even. Then, from among those, we need to find 
those which can {\it also} be written as a sum of two $d$-th powers of even quadratic forms.

How can it happen that $f_3^d + f_4^d$ is even when at least one of $\{f_3,f_4\}$ is not even? 
Two cases come readily to mind:
\begin{equation}\label{E:tame}
(a x^2 + b x y + c y^2)^d + (a x^2 - b x y + c y^2)^d,
\end{equation}
and, if $d$ is even, 
\begin{equation}\label{E:tame2}
(a x^2 + c y^2)^d + b(xy)^d.
\end{equation}
We call \eqref{E:tame} and \eqref{E:tame2} the {\it tame} cases; otherwise $\{f_3,f_4\}$ are in the
{\it wild} case. 
There is an important  practical distinction. The tame expressions are formally symmetric
under $y \mapsto -y$, but wild expressions are not. Thus, any wild \eqref{E:main}
implies the 
existence  of a  {\it third}  representation for $p$ a sum of two $d$-th powers.

The case $d=3$ is best handled by other techniques and is covered in the companion
paper \cite{Re3}. In preparation for implementing this strategy, we calculate the tame and
wild cases which might occur from the list   of
$\mathcal W_2(4,d)$ sets for $d \ge 4$ in Theorems \ref{W4} and \ref{W5}.
 Each identity \eqref{E:main} has two flips: $f_1^d - f_3^d =  f_4^d-
f_2^d$ and $f_1^d - f_4^d =  f_3^d-f_2^d$, and since either side can be diagonalized, there are 
potentially six cases. (If there are three equal sums, there are potentially fifteen cases.)
Fortunately, symmetry reduces the number of cases substantially. 

\begin{theorem}\label{T:44}
\

(i) The diagonalizations of \eqref{E:quartic} are, up to scaling, 
\begin{equation}\label{E:tame43}
\begin{gathered}
(x^2 + y^2)^4 -18(xy)^4 = -(\om x^2 + \om^2 y^2)^4 - (\om^2 x^2 + \om y^2)^4 \\
= x^8 + 4x^6y^2 - 12x^2y^2 + 4x^2y^6 + y^8,
\end{gathered}
\end{equation}
and
\begin{equation}\label{E:tame41}
\begin{gathered}
- (2x^2 + 2y^2)^4 + 18(x^2 - y^2)^4  \\
= (x^2 + 2\sqrt{-3}\ x y + y^2)^4 + (x^2 -2\sqrt{-3}\ x y + y^2)^4 \\
= 2(x^8 - 68x^6y^2+6x^4y^4-68x^2y^6+y^8).
\end{gathered}
\end{equation}
(ii) The diagonalizations of \eqref{E:quartic13} are, up to scaling, 
\begin{equation}\label{E:late4}
\begin{gathered}
(\al x^2  - \be y^2)^4 - (\be x^2 - \al y^2)^4 \\ = (\om x^2 - \sqrt 3\ xy - \om^2y^2)^4 - 
(\om^2 x^2 - \sqrt 3\ xy - \om y^2)^4 \\ 
= (\om x^2 + \sqrt 3\ xy - \om^2y^2)^4 - 
(\om^2 x^2 +\sqrt 3\ xy - \om y^2)^4 \\
= \sqrt{-3}\left( x^8 - 14x^6y^2+14x^2y^6 - y^8\right),
\end{gathered}
\end{equation}
where $\al = \frac {2+\sqrt{-3}}2, \be =  \tfrac {2-\sqrt{-3}}2$; and
\begin{equation}\label{E:4last2}
\begin{gathered}
((1 + \sqrt{-6})x^2 + (1 - \sqrt{-6})y^2)^4 + ((1 - \sqrt{-6})x^2 + (1 + \sqrt{-6})y^2)^4 \\
= (x^2 + 2\sqrt{-6}\ xy + y^2)^4 + (x^2 - 2\sqrt{-6}\ xy + y^2)^4\\ 
= 2(x^8 - 140x^6y^2 + 294x^4y^4 - 140x^2y^6 + y^8).
\end{gathered}
\end{equation}

(iii) The diagonalization of \eqref{E:quintic11} is, up to scaling,
\begin{equation}\label{E:tame51}
\begin{gathered}
((1 -  \sqrt{-2}) x^2 + (1 + \sqrt{-2}) y^2)^5  + ((1 + \sqrt{-2}) x^2 + (1 - \sqrt{-2}) y^2)^5 \\
=  (x^2 - 2\sqrt{-2}\  x y + y^2)^5 + (x^2 + 2\sqrt{-2}\  x y + y^2)^5 = \\
=2(x^{10} - 75x^8y^2+90x^6y^4+90x^4y^6-75x^2y^8+y^{10}).
\end{gathered}
\end{equation}
\end{theorem}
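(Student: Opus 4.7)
The plan is to enumerate, for each source identity, every essentially distinct way of flipping into a pair-vs-pair form $f_1^d+f_2^d = f_3^d+f_4^d$ and then diagonalizing one pair via Theorem \ref{T:diag}, which applies because Theorem \ref{T:even} guarantees the relevant pairs are coprime. The symmetry group of each source identity drastically reduces the case analysis, and each surviving case is verified by direct polynomial expansion. For part (i), \eqref{E:quartic} carries an $S_3$ symmetry permuting its three $\om$-forms and fixing $xy$ up to scalar, so the three two-versus-two flips are equivalent. Within the remaining flip-class there are two essentially distinct diagonalizations depending on which pair is placed on the diagonal side: diagonalizing the pair of two $\om$-forms requires no change of variables (they are already even), yielding \eqref{E:tame43} directly; diagonalizing the other pair $\{x^2+y^2, xy\}$ requires the change $(x,y) \mapsto (x+y, x-y)$, which carries the two $\om$-forms to the tame pair $-(x^2+y^2) \pm 2\sqrt{-3}\, xy$, producing \eqref{E:tame41} after regrouping and rescaling.

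For part (ii), the $W_2(6,4)$ identity \eqref{E:quartic13} carries a $C_3$ rotational symmetry cycling its three rows together with a $C_2$ reflection symmetry swapping the two forms in each row, and together these reduce the many flip-plus-diagonalize combinations to the two listed cases. The first is the diagonalization of the row-one pair into $\{\al x^2 - \be y^2, \be x^2 - \al y^2\}$ with $\al = (2+\sqrt{-3})/2$, $\be = (2-\sqrt{-3})/2$, with the other two rows of \eqref{E:quartic13} carried along as non-even pairs to yield \eqref{E:late4}; the second is a different flip whose diagonalization produces \eqref{E:4last2}, featuring the even pair $(1 \pm \sqrt{-6})x^2 + (1 \mp \sqrt{-6})y^2$ and tame companion $x^2 \pm 2\sqrt{-6}\, xy + y^2$. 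The particular constants arise by solving the off-diagonal elimination equation $(a-c)\sin 2z + b\cos 2z = 0$ used in the proof of Theorem \ref{T:diag}.

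For part (iii), the cyclic symmetry $k \mapsto k+1$ of \eqref{E:quintic11} permutes the four-form $W_2(4,5)$ set and sweeps every available flip into a single orbit; moreover, the two choices of which pair to place on the diagonal side coincide up to scaling under this symmetry. Thus only \eqref{E:tame51} survives: diagonalizing either of the two (up-to-sign) tame pairs via an appropriate linear change and scaling produces the even pair $(1 \pm \sqrt{-2})x^2 + (1 \mp \sqrt{-2})y^2$ with companion $x^2 \pm 2\sqrt{-2}\, xy + y^2$, and the common value is verified by expansion using $(1\pm\sqrt{-2})(1\mp\sqrt{-2}) = 3$.

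The principal obstacle is algebraic bookkeeping: correctly identifying the diagonalizing linear change in each case, tracking its action on every form, and verifying the resulting polynomial identities by direct expansion of the common right-hand side. Each individual step is elementary, but the number of potential cases (three flip-pairings times two diagonalization-choices, further multiplied when there are three equal sums) requires disciplined use of the symmetries of the source identities to keep the computation manageable and to ensure no essentially distinct case is overlooked.
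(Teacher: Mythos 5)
Your overall strategy (enumerate flips, simultaneously diagonalize one side via Theorem \ref{T:diag}, cut down the cases by symmetry, verify survivors by expansion) is the same as the paper's, and part (i) is handled exactly as the paper handles it. But the symmetry reductions you invoke in parts (ii) and (iii) are too small to justify the claim that only the listed cases survive. In (ii), the group generated by the $C_3$ cycling the rows of \eqref{E:quartic13} and the $C_2$ map $y\mapsto -y$ swapping the two forms in each row has order $6$, and its action on the fifteen pairs $\{f_{1,i},f_{1,j}\}$ has \emph{three} orbits, not two: the same-row pairs (leading to \eqref{E:late4}), the cross-row pairs whose $xy$-coefficients have opposite signs (leading to \eqref{E:4last2}), and the cross-row pairs with equal signs --- for instance the side $\{f_{1,3},f_{1,5}\}$ of the legitimate flip $f_{1,3}^4-f_{1,5}^4=f_{1,4}^4-f_{1,6}^4$ --- which your reduction never touches. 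The paper closes exactly this gap with the additional linear change $M_2:(x,y)\mapsto\tfrac1{\sqrt2}(x+iy,\,ix+y)$, a quarter-turn of the Klein set that is \emph{not} generated by your two symmetries; it carries the equal-sign cross-row pairs onto same-row pairs and shows the third orbit only reproduces \eqref{E:late4}. Without $M_2$ (or an explicit computation for a representative of the third orbit) your case analysis for (ii) is incomplete.

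The same problem recurs in (iii): the cyclic symmetry $k\mapsto k+1$ of \eqref{E:quintic11} does \emph{not} sweep all flips into one orbit. It fixes the partition $\{f_{2,1},f_{2,3}\}\mid\{f_{2,2},f_{2,4}\}$ and merely interchanges the other two partitions, so the six (flip, side) choices fall into two orbits, of sizes $2$ and $4$. The paper needs the further involution $M_5:(x,y)\mapsto\tfrac1{\sqrt2}(-x+\zeta_8^5y,\,\zeta_8^3x+y)$, which fixes $f_{2,1},f_{2,4}$ and transposes $f_{2,2},f_{2,3}$, to merge these orbits before a single computation suffices. You do land on the correct final identities, but as written your argument does not rule out that the unexamined orbits produce diagonalizations inequivalent to those listed; supplying the extra rotations $M_2$ and $M_5$ (or direct computations for representatives of the missing orbits) is the substantive content that is absent.
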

\begin{proof}
(i) First, in \eqref{E:quartic}, the summands on the left are cyclically permuted by 
$(x,y) \mapsto (\om x, \om^2 y)$, so there is only one choice up to scaling. 
One is already diagonalized as in \eqref{E:tame43}. To diagonalize the left-hand side
in \eqref{E:tame43}, take $(x,y) \mapsto (x+y, x-y)$ and multiply through
by $-1$, to obtain \eqref{E:tame41}.

(ii) It is convenient to name the forms from  \eqref{E:quartic13} in \eqref{E:413} .  Let 
\begin{equation}\label{E:413}
\begin{gathered}
f_{1,1}(x,y) = x^2 + \sqrt 3\ xy - y^2, \quad
f_{1,2}(x,y) = x^2 - \sqrt 3\ xy - y^2,  \\
f_{1,3}({x,y})  = f_{1,1}(\om^2 x,\om y), \quad 
f_{1,4}({x,y}) = f_{1,2}(\om^2 x,\om y),\\
 f_{1,5}({x,y})  = f_{1,1}(\om x,\om^2 y), \quad f_{1,6}({x,y})  = 
 f_{1,2}(\om x,\om^2 y); \\
  f_{1,1}^4 - f_{1,2}^4 = f_{1,3}^4 - 
f_{1,4}^4 = f_{1,5}^4 - f_{1,6}^4 = 8 \sqrt 3\  xy\ (x^6 - y^6).
\end{gathered}
\end{equation}
Let $M_1$ denote the linear change $(x,y) \mapsto (\om^2 x, \om y)$, so that 
$M_1$ cycles $f_{1,1}\mapsto f_{1,3} \mapsto f_{1,5} \mapsto f_{1,1}$ and
$f_{1,2}\mapsto f_{1,4} \mapsto f_{1,6} \mapsto f_{1,2}$. 
Let $M_2$ denote the linear change  $(x,y) \mapsto \frac 1{\sqrt{2}}(x + iy, ix + y)$, which
has two nice properties. First, $M_2$ cycles $f_{1,3}\mapsto f_{1,5}\mapsto f_{1,6} \mapsto
f_{1,4} \mapsto f_{1,3}$, but it also takes
$(f_{1,1},f_{1,2}) \mapsto (\al x^2  - \be y^2, \be x^2 - \al y^2)$.
On the Riemann sphere, $M_1$ induces a $\frac{2\pi}3$ rotation on the axis of the poles.
and $M_2$ induces the rotation taking $(a,b,c) \mapsto (a,c,-b)$.

By repeatedly using $M_1$ and $M_2$, the fifteen pairs
$\{f_{1,i},f_{1,j}\}$ which might be simultaneously diagonalized given the identity
$f_{1,3}^4-f_{1,4}^4 = f_{1,5}^4-f_{1,6}^4$, reduce to two cases, after 
linear changes. We have already seen one: $M_2$ diagonalizes
\eqref{E:quartic13} into \eqref{E:late4}. 

For the other, note that 
\begin{equation}\label{E:4last1}
\begin{gathered}
f_{1,4}^4(x,y) + f_{1,5}^4(x,y) = f_{1,3}^4(x,y) + f_{1,6}^4(x,y)\\ = 
-(x^8 + 14 x^6y^2 + 42 x^4y^4+14x^2y^6+y^8).
\end{gathered}
\end{equation}
An appeal to Theorem \ref{T:syl} shows that the octic in \eqref{E:4last1} is {\it not} a sum of two
fourth powers of even quadratic forms. Under the linear change $M_3$, 
which takes $(x,y) \mapsto ( x -(\sqrt{2} - 1)y,   i(\sqrt{2} - 1)x+ iy)$ and division by 
$\sqrt 2 - 2$, \eqref{E:4last1} becomes \eqref{E:4last2}.

(iii) We name the quadratics from  \eqref{E:quintic11} in \eqref{E:415}. Let
$M_4$ be the scaling $(x,y) \mapsto (\zeta_8 x, \zeta_8^3y)$, which takes $(x^2,xy,y^2)
\mapsto (ix^2, -xy, -iy^2)$, so that
\begin{equation}\label{E:415}
\begin{gathered}
f_{2,1}(x,y) = x^2 + \sqrt{-2} \ x y + y^2, \ f_{2,2} = f_{2,1}\circ M_4, \
 f_{2,3} = f_{2,2} \circ M_4, \\ f_{2,4} = f_{2,3} \circ M_4; 
  \qquad f_{2,1}^5 + f_{2,2}^5 + f_{2,3}^5 + f_{2,4}^5 = 0.
\end{gathered}
\end{equation}
Thus $M_4$ cycles $f_{2,1} \mapsto f_{2,2} \mapsto f_{2,3} \mapsto f_{2,4} \mapsto f_{2,1}$.
The symmetry of the Klein set for $\{f_{2,j}\}$ (the cube) suggests that we
let $M_5$ be the 
linear change $(x,y) \mapsto \tfrac 1{\sqrt{2}} \cdot(-x+\zeta_8^5y,\zeta_8^3x + y)$. Then
$ M_5$ fixes $f_{2,1}$ and $f_{2,4}$ and permutes $f_{2,2}$ and $f_{2,3}$.

Thus $M_4$ maps the flip $f_{2,1}^5 + f_{2,2}^5 = -f_{2,3}^5-f_{2,4}^5$ into 
$f_{2,2}^5 + f_{2,3}^5 = -f_{2,4}^5-f_{2,1}^5$ and $ M_5$ maps it into 
$f_{2,1}^5 + f_{2,3}^5 = -f_{2,2}^5-f_{2,4}^5$, so, up to cousin, we need only
consider one flip. The easiest one to deal with is $f_{2,1}^5 + f_{2,3}^5 = -f_{2,2}^5-f_{2,4}^5$. 
This is
\begin{equation}\label{E:flippedquintic}
\begin{gathered}
(x^2 + \sqrt{-2}\ x y + y^2)^5 + (-x^2 + \sqrt{-2}\  x y - y^2)^5 \\ = -(i x^2 - \sqrt{-2}\ x y -i y^2)^5 - 
(-i x^2 - \sqrt{-2}\ x y + i y^2)^5\\  = 2\sqrt{-2}\ xy(5x^8 - 6x^4y^4+5y^8).
\end{gathered}
\end{equation}
Upon taking $(x,y) \mapsto (x+iy,x-iy)$, and dividing by $\sqrt{-2}$, 
\eqref{E:flippedquintic} becomes \eqref{E:tame51}. And under the linear change,
$(x,y) \mapsto \frac1{\sqrt{2}}(x + i y, x - i y)$, \eqref{E:quartic12} also becomes  \eqref{E:tame51}.
The Klein set of the summands in \eqref{E:tame51}
is a rotated cube lying in the planes $y = \pm \sqrt{1/3}$, 
so that the edge $(0,\pm\sqrt{1/3},\sqrt{2/3})$  lies on top. 
\end{proof}

\section{Finishing the proof}

We first make a simplifying observation in the tame case. If $(f_3,f_4)$ is given in \eqref{E:tame} or 
\eqref{E:tame2}  and $a=0$ (or $c=0$), then $f_3$ and $f_4$ have a common factor of 
$y$ (or $x$), violating Theorem \ref{T:even}. Similarly, we may assume that $b \neq 0$. 
Thus, after scaling, we may assume that \eqref{E:tame} and \eqref{E:tame2}  take the shape
\begin{equation}\label{E:tamer1}
(x^2 + b x y + y^2)^d + (x^2 - b x y + y^2)^d, \qquad b \neq 0;
\end{equation}
\begin{equation}\label{E:tamer2}
(x^2 + y^2)^{2e} + b\binom {2e}{e}(xy)^{2e}, \qquad b \neq 0.
\end{equation}
\begin{theorem}\label{T:tameanswer}
The only $\mathcal W_2(4,d)$ sets which come from a tame representation for $d \ge 4$ are
given in Theorem \ref{T:44} by \eqref{E:tame43}, \eqref{E:tame41}, \eqref{E:4last2}, and 
 \eqref{E:tame51}. These
sets are all cousins or sub-cousins of the families in Theorems \ref{W4}, \ref{W5}. 
\end{theorem}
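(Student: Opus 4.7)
The plan is to treat each of the two tame normalizations separately and, in each case, use Sylvester's criterion (Theorem \ref{T:syl}) to decide precisely when the even binary form $p=f_3^d+f_4^d$ admits a \emph{second} representation $p=f_1^d+f_2^d$ with $f_1,f_2$ honest even quadratics. Writing $p=\sum_\ell \binom{d}{\ell} a_\ell x^{2d-2\ell}y^{2\ell}$, Sylvester's criterion becomes the condition that the $(d-1)\times 3$ matrix $[a_{i+j}]$ have rank at most $2$ with some non-square quadratic in its kernel. Since each $a_\ell$ depends only on the single remaining parameter $b$, the existence of a second representation translates into a finite set of polynomial equations in $b$ that we can solve case by case.

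For the second tame form \eqref{E:tamer2} (with even $d=2e$) the coefficients are trivial: $a_\ell=1$ for $\ell\ne e$ and $a_e=1+b$. A single well-placed $3\times 3$ minor straddling the perturbed diagonal then gives, once $d\ge 6$, a polynomial in $b$ of the form $-b^2$, which cannot vanish under the honesty assumption $b\ne 0$. For $d=4$ the corresponding $3\times 3$ determinant factors as $-b^2(b+3)$, so $b=-3$ is forced, and one recovers (up to scaling and a linear change) exactly the identity \eqref{E:tame43}.

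For the first tame form \eqref{E:tamer1} the trinomial expansion yields
\[
a_\ell = 2\sum_{m=0}^{\min(\ell,d-\ell)} \binom{\ell}{m}\binom{d-\ell}{m}\binom{2m}{m}^{-1} b^{2m},
\]
so each $a_\ell$ is a polynomial in $c:=b^2$. For $d=4$ the $3\times 3$ determinant works out to a constant multiple of $c^4(c^2+36c+288)$, whose non-zero roots $c=-12$ and $c=-24$ correspond to \eqref{E:tame41} and \eqref{E:4last2}. For $d=5$ the Sylvester matrix is $4\times 3$; requiring two of its $3\times 3$ minors to vanish simultaneously, and taking their resultant in $c$, isolates the single admissible value $c=-8$, which gives \eqref{E:tame51}. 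For $d\ge 6$ the plan is to select two $3\times 3$ minors whose leading and constant terms (as polynomials in $c$) are incompatible, forcing the absence of any non-zero common root. Establishing this incompatibility uniformly in $d$ is the chief obstacle and demands the bulk of the combinatorial work; a practical route is to pair the top-left minor (of low degree in $c$) with a minor shifted one step toward the anti-diagonal and compare leading coefficients.

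Finally, for every admissible pair $(d,b)$ identified in the previous steps, the kernel vector of the Sylvester matrix provides a non-square quadratic $h(u,v)$ whose two linear factors, together with the recurrence $a_j=\lambda_1\gamma_1^j+\lambda_2\gamma_2^j$ from the proof of Theorem \ref{T:syl}, reconstruct the even pair $(f_1,f_2)$ explicitly. It then remains only to match the resulting $\mathcal W_2(4,d)$ identity, up to permutation, scaling, and linear change, with one of \eqref{E:tame43}, \eqref{E:tame41}, \eqref{E:4last2}, \eqref{E:tame51}; the cousin or sub-cousin relationships with the families of Theorems \ref{W4} and \ref{W5} are already established in Theorem \ref{T:44}.
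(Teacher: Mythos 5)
Your overall strategy is the same as the paper's: normalize the two tame shapes, expand, and apply Sylvester's criterion (Theorem \ref{T:syl}) to get polynomial conditions in $b$ (or $c=b^2$). Your handling of \eqref{E:tamer2} is correct and complete (the minor built from one all-ones row and the two perturbed rows does equal $-b^2$ for $d\ge 6$), your coefficient formula for \eqref{E:tamer1} is correct, and your computations for $d=4,5$ reproduce the paper's: the determinant $-b^2(b+3)$ forcing $b=-3$, the factor $(c+12)(c+24)$ for $d=4$, and $c=-8$ for $d=5$.

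The genuine gap is the case $d\ge 6$ of \eqref{E:tamer1}, which you explicitly defer as ``the chief obstacle.'' Your proposed route --- pick two $3\times 3$ minors and show their leading and constant terms in $c$ are incompatible --- will not work as stated, because the minors of the natural submatrices do have a common non-zero root. Concretely, the paper takes the $4\times 3$ submatrix formed by the first two and last two rows of the $2$-Sylvester matrix; its $\{1,2,4\}$ minor factors as a constant times $b^8\,(12+b^2(d-3))(24+b^2(2d-7))$, so one must separately dispose of $b^2=-12/(d-3)$ and $b^2=-24/(2d-7)$. The first is killed by the $\{1,2,3\}$ minor of that submatrix, but at $b^2=-24/(2d-7)$ \emph{all four} minors of that submatrix vanish simultaneously, and one must go to a different minor entirely (the paper uses the determinant of the first three rows of the full $2$-Sylvester matrix, which evaluates to a non-zero multiple of $(d-5)(d-4)d(d+1)(2d-1)^2/(2d-7)^6$) to finish. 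So the argument for $d\ge 6$ is necessarily a two-stage elimination with an explicitly computed residual determinant, not a leading-coefficient comparison; until you carry out that computation, the theorem is not proved for $d\ge 6$. Note also that $b^2=-24/(2d-7)$ specializes to the genuine solutions $b^2=-24$ at $d=4$ and $b^2=-8$ at $d=5$, which is further evidence that no soft incompatibility argument can exclude it --- the exclusion really does depend on $d\ge 6$ through a factor like $(d-5)(d-4)$.
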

\begin{proof}
We analyze \eqref{E:tamer2} first. The 2-Sylvester matrix of $(x^2 + y^2)^4 + 6b(xy)^4$ is
\begin{equation}\label{E:tame2syl}
\begin{pmatrix}
1 & 1 & 1+b\\
1 & 1+b &1\\
1+b &1 & 1
 \end{pmatrix},
\end{equation}
 which has rank 2 only if $-b^2(b+3)= 0$; if $b=-3$, we obtain \eqref{E:tame43}.

If $d = 2s \ge 6$ and $p_{2s,b}(x,y) = (x^2 + y^2)^{2s} + b \binom {2s}s (xy)^{2s}$, 
then the $(2s-1) \times 3$ 2-Sylvester
matrix  consists of \eqref{E:tame2syl}, with $s-2$ rows of $(1,1,1)$ appended both at the 
top and the bottom. Such a matrix has rank 2 only if $b=0$. 

For \eqref{E:tamer1}, we first observe that
\begin{equation}\label{E:tamer1cheat}
(x^2 + b x y + y^2)^d + (x^2 - b x y + y^2)^d =
2\sum_{0 \le i \le d/2} \binom d{2i}(x^2 + y^2)^{d-2i}(xy)^{2i}.
\end{equation}
Suppose $d=4$. Then the sum in \eqref{E:tamer1cheat} becomes
\[
\begin{gathered}
2x^8 + (8+12b^2)x^6y^2 + (12 + 24b^2 + 2b^4)x^4y^4 +(8+12 b^2) x^2 y^6 + 2 y^8.
\end{gathered}
\]
Apply Theorem \ref{T:syl}: the
2-Sylvester matrix has discriminant $-\frac{b^8}{27}(12 + b^2) (24 + b^2)$, and has rank 2
only if  $b^2 \in \{-12,-24\}$. These cases are presented in \eqref{E:tame41} and \eqref{E:4last2},
 and are a cousin of \eqref{E:quartic} and a sub-cousin of \eqref{E:quartic13}, respectively.

Suppose $d=5$. Then applying Theorem  \ref{T:syl}  to  \eqref{E:tamer1cheat} gives a
$4 \times 3$ matrix; computing the $3\times 3$ minors shows that the matrix has rank 2 only when 
$b=0$ or $b^2 = -8$. Taking $b = \sqrt{-8}$, we obtain \eqref{E:tame51}, which is a cousin of 
\eqref{E:quintic11}.

Now suppose $d \ge 6$; \eqref{E:tamer1cheat}  gives
\[
\begin{gathered}
a_0= a_d = 2, \quad a_1 = a_{d-1} = 2 + b^2(d-1), \\ 
a_2 = a_{d-2} = 2 + b^2(d-2)(12+(d-3)b^2)/6, \\
a_3 = a_{d-3} = 2 + b^2(d-3)(180 + b^2(30 d - 120) +b^4(d^2-9d+20))/60.
\end{gathered}
\]
The submatrix of the 2-Sylvester matrix consisting of the first and last two rows is
\[
\begin{pmatrix}
a_0&a_1&a_2 \\
a_1 &a_2 & a_3 \\
a_3 & a_2 & a_1 \\
a_2 & a_1 & a_0
\end{pmatrix}.
\]
The 1,2,4 minor of this sub-matrix is
$- \frac{b^8}{9(d-1)}\binom{d+1}5 (12 + b^2(d-3))(24 + b^2(2d-7)).$
If $b^2 = - \frac{12}{d-3}$, then the 1,2,3 minor becomes 
$\frac{55296\ d^2(d+1)(d-4)}{25(d-3)^5} \neq 0.$
However, if $b^2 = - \frac{24}{2d-7}$, then all four minors vanish. (Note that 
$d=4,5$ then give $b^2=-24, b^2 = -8$, which we have already seen.) 
We re-compute the $a_k$'s for  $b^2 = - \frac{24}{2d-7}$, and find that
the first three rows of the 2-Sylvester matrix give
\[
\begin{vmatrix}
a_0&a_1&a_2 \\
a_1 &a_2 & a_3 \\
a_2 & a_3 & a_4
\end{vmatrix} = - \frac{3538944 (d-5) (d-4) d (1 + d) (2d-1)^2}{175 (2d-7)^6} \neq 0.
\]
Thus, no tame representations exist when $d \ge 6$.
\end{proof}

Suppose now that we have a wild representation
\begin{equation}\label{E:wild1}
\begin{gathered}
p(x,y) = (a_1x^2 + b_1 x y + c_1y^2)^d + (a_2x^2 + b_2 x y + c_2y^2)^d \\ = 
\sum_{i=0}^{2d} s_i(a_1,b_1,c_1,a_2,b_2,c_2;d) x^{2d-i}y^i,
\end{gathered}
\end{equation}
where $d \ge 4$, $s_{2j+1}(a_1,b_1,c_1,a_2,b_2,c_2;d) = 0$ for $0 \le j \le d-1$, 
$(b_1,b_2) \neq (0,0)$ and \eqref{E:wild1} is not in the form \eqref{E:tame} or 
\eqref{E:tame2}.  

\begin{lemma}\label{KeyLemma}
Suppose  $p\neq 0$ and \eqref{E:wild1} holds. Then, after a scaling of $x$ and $y$, 
\begin{equation}\label{E:wild2}
p(x,y) = p_{\la,\al,\be}(x,y) := (x^2 -\la\al x y + y^2)^d + \la(x^2 + \al x y + \be y^2)^d,
\end{equation}
where $\al\la \neq 0$,  $\be^{d-1} = 1$ and $\la^2 \neq 1$.
\end{lemma}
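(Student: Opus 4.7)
The plan is to treat the statement as pure normalization. Two linear relations among the coefficients, coming from the vanishing of the highest- and lowest-degree odd monomials of $p$, together with the scaling freedom $(x,y) \mapsto (sx, ty)$, essentially determine the form; a single cubic coefficient then rules out $\lambda^2 = 1$.

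\textbf{Step 1: all six coefficients are nonzero.} Writing $f_i = a_i x^2 + b_i xy + c_i y^2$, evenness of $p$ applied to $x^{2d-1}y$ and $xy^{2d-1}$ gives
\begin{equation*}
a_1^{d-1} b_1 + a_2^{d-1} b_2 = 0, \qquad c_1^{d-1} b_1 + c_2^{d-1} b_2 = 0.
\end{equation*}
Assume WLOG $b_1 \neq 0$. If $b_2 = 0$, these force $a_1 = c_1 = 0$, so $f_1 = b_1 xy$ and $f_2 = a_2 x^2 + c_2 y^2$; for odd $d$ the $x^d y^d$ coefficient in $p$ is then $b_1^d \neq 0$, violating evenness, and for even $d$ this is precisely the excluded tame form \eqref{E:tame2}. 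Thus $b_1 b_2 \neq 0$. Next, $a_1 = 0$ combined with $b_1 b_2 \neq 0$ forces $a_2 = 0$, i.e., $y \mid \gcd(f_1, f_2)$, contradicting Theorem \ref{T:even}. By symmetry, $a_1, a_2, c_1, c_2$ are all nonzero.

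\textbf{Step 2: scaling and parameters.} Apply the scaling $(x,y) \mapsto (a_1^{-1/2}x, c_1^{-1/2}y)$, which makes the new $a_1 = c_1 = 1$. Renaming coefficients, the two equations become $b_1 = -a_2^{d-1} b_2 = -c_2^{d-1} b_2$, hence $(c_2/a_2)^{d-1} = 1$. Setting $\beta := c_2/a_2$, $\alpha := b_2/a_2$, and $\lambda := a_2^d$, we obtain $\beta^{d-1} = 1$, $\alpha \lambda \neq 0$, $b_1 = -\lambda \alpha$, and $f_2^d = \lambda(x^2 + \alpha xy + \beta y^2)^d$, producing the target form $p_{\lambda,\alpha,\beta}$ of \eqref{E:wild2}.

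\textbf{Step 3: $\lambda^2 \neq 1$.} The one genuine computation is the coefficient of $x^{2d-3}y^3$ in $p_{\lambda,\alpha,\beta}$. Only the trinomial indices $j = 1$ and $j = 3$ contribute, and a direct expansion yields
\begin{equation*}
[x^{2d-3}y^3]\, p_{\lambda,\alpha,\beta} = d(d-1)\lambda\alpha\left[(\beta-1) + \tfrac{d-2}{6}\alpha^2(1-\lambda^2)\right].
\end{equation*}
Evenness forces this to vanish; since $\alpha\lambda \neq 0$, the assumption $\lambda^2 = 1$ would force $\beta = 1$. But $\lambda = 1, \beta = 1$ recovers the tame form \eqref{E:tamer1} with $b = -\alpha$, and $\lambda = -1, \beta = 1$ gives $p = 0$; both contradict the standing hypotheses. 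Hence $\lambda^2 \neq 1$. The only nontrivial part is the cubic coefficient computation above; everything else is direct linear algebra on the coefficients.
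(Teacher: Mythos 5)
Your proof is correct and follows essentially the same route as the paper: use the vanishing of the $x^{2d-1}y$ and $xy^{2d-1}$ coefficients plus the gcd condition from Theorem \ref{T:even} to show all six coefficients are nonzero, scale so $a_1=c_1=1$, and read off $b_1=-\la\al$ and $\be^{d-1}=1$. Your Step 3 is in fact slightly more careful than the paper, which asserts that $\la^2=1$ forces the tame case or $p=0$ without noting that one must first deduce $\be=1$ from the vanishing of the $x^{2d-3}y^3$ coefficient --- the computation you carry out explicitly (and which agrees with the paper's later formula for $a_3$ in the proof of Theorem \ref{T:wildanswer}).
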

\begin{proof}
First suppose $b_1 = 0$ in \eqref{E:wild1}. Then $s_1 = da_2^{d-1}b_2$ and $s_{2d-1}= 
db_2c_2^{d-1}$. Since $(b_1,b_2) \neq (0,0)$, we have $a_2=c_2 = 0$ and $p(x,y) = 
(a_1x^2 + c_1y^2)^d + ( b_2 x y )^d$ is even, so $d$ is even and we have
 \eqref{E:tame2}. A similar argument lets us conclude that $b_2 \neq 0$.

Suppose now that $a_1 = 0$. Then  $s_1 = da_2^{d-1}b_2 =0$, and $b_2 \neq 0$ implies
 $a_2 = 0$. It then follows that $y$ divides both $f_3$ and $f_4$, contradicting
Theorem \ref{T:even}. Thus $a_1 \neq 0$, and 
by similar arguments, we have $a_2c_1c_3 \neq 0$. That is, we may assume that all the 
coefficients in \eqref{E:wild1} are non-zero.

We now scale $x$ and $y$ so that $a_1 = c_1 = 1$ and let $\la = a_2^d$, so that, after renaming,
\begin{equation}\label{E:wild1.5}
p(x,y) = (x^2 + \al_1 x y + y^2)^d + \la(x^2 + \al_2 x y + \be y^2)^d,
\end{equation}
where all parameters are non-zero. Returning to the computation, 
\[
\begin{gathered}
s_1 = d(\al_1 + \la \al_2) = 0,\quad 
 s_{2d-1} = d(\al_1  +  \la \al_2\be^{d-1}) = 0.
 \end{gathered}
\]
It follows that $\al_1 = - \la \al_2$, and since $\la\al_2 \neq 0$, it also follows that $
\be^{d-1} = 1$. We now write $\al = \al_2$, so that $\al_1 = -\la \al$, and \eqref{E:wild1.5} 
becomes \eqref{E:wild2}. Finally, if $\la^2 = 1$, then either $\la = 1$ (and \eqref{E:wild2} 
reduces to \eqref{E:tame}), or $\la = -1$ (and \eqref{E:wild2} implies $p=0$).
\end{proof}
\begin{theorem}\label{T:wildanswer}
For $d \ge 4$, the only $\mathcal W_2(4,d)$ set which comes from a wild  representation is
found in \eqref{E:4last2}, and is a  sub-cousin of \eqref{E:quartic13}.
\end{theorem}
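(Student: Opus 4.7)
The plan is to start from Lemma \ref{KeyLemma}, which already reduces the wild case to
\[
p_{\la,\al,\be}(x,y) = (x^2-\la\al xy+y^2)^d + \la(x^2+\al xy+\be y^2)^d,
\]
with $\al\la\neq 0$, $\la^2\neq 1$, and $\be^{d-1}=1$. On top of these I would impose two further requirements: \emph{all} odd-index coefficients $s_{2k+1}$ of $p_{\la,\al,\be}$ must vanish (not just the extremal ones $s_1$ and $s_{2d-1}$ already used in the lemma), since Theorem \ref{T:even} demands that $p$ be even; and $p_{\la,\al,\be}$ must \emph{also} be expressible as a sum of two $d$-th powers of even quadratics, so its $2$-Sylvester matrix from Theorem \ref{T:syl} must have rank at most $2$ with a non-square kernel. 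The combined system is algebraic, and the plan is to solve it explicitly.

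Using $(x^2+axy+\ga y^2)^d = \sum_{k=0}^d \binom{d}{k}(axy)^k (x^2+\ga y^2)^{d-k}$, the coefficient of $x^{2d-(2k+1)}y^{2k+1}$ in $p_{\la,\al,\be}$ factors as
\[
s_{2k+1} = \al\la \sum_{\substack{m\text{ odd}\\ 1\le m\le 2k+1}} \binom{d}{m}\binom{d-m}{(2k+1-m)/2}\,\al^{m-1}\bigl(\be^{(2k+1-m)/2} - \la^{m-1}\bigr).
\]
The equation $s_3=0$ (using $\la^2\neq 1$, $\be\neq 1$) already resolves to
\[
\al^2 = \frac{6(1-\be)}{(d-2)(1-\la^2)},
\]
so $\al^2$ is a rational function of $(\la^2,\be)$. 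Substituting into $s_5=0$ gives a polynomial relation in $\la^2$ and $\be$ alone, which together with $\be^{d-1}=1$ leaves finitely many candidate pairs $(\la^2,\be)$. For $d\le 5$ this is all the odd-coefficient information; for $d\ge 6$ one continues with $s_7=0,\dots$, each imposing an additional relation that (combined with the finite list) should admit no admissible point.

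For every surviving triple $(\la,\al,\be)$ I would then read off the even coefficients $a_j$ from $p_{\la,\al,\be}(x,y) = \sum a_j\binom{2d}{2j}x^{2d-2j}y^{2j}$ and test them against Theorem \ref{T:syl}. For $d=4$ the expected outcome is a single family (up to linear change) yielding \eqref{E:4last2}, which Theorem \ref{T:44}(ii) already identifies as a sub-cousin of \eqref{E:quartic13}. For $d=5$ the combined parity and Sylvester conditions should admit no solution with $\la^2\neq 1$, consistently with the classification in Theorem \ref{W5}. For $d\ge 6$ the system must be unsatisfiable, thereby simultaneously delivering the wild-case half of Theorem \ref{MT}(8).

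The main obstacle is carrying the $d\ge 6$ elimination through uniformly in $d$, since the coefficients of the $s_{2k+1}$ depend on $d$ combinatorially and the successive resultants do not obviously simplify. A potentially cleaner organizing principle exploits the symmetry $(x,y)\mapsto(x,-y)$: because $p_{\la,\al,\be}$ is even, the pair $(f_3(x,-y),f_4(x,-y))$ yields a \emph{second} wild representation distinct from the original (since $(b_1,b_2)\neq(0,0)$), so any wild $\mathcal W_2(4,d)$ configuration is automatically a sub-cousin of a configuration with three simultaneous representations of $p$. The rank-$2$ Sylvester matrix has only a one-dimensional kernel of nontrivial quadratic recurrences, and requiring three distinct pairs of linear forms to produce that same kernel is highly overdetermined; I expect this overdetermination, rather than brute coefficient chasing, is what ultimately rules out $d\ge 6$ while remaining compatible with the known $d=4$ identity \eqref{E:4last2}.
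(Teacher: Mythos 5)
Your overall strategy matches the paper's: reduce via Lemma \ref{KeyLemma} to $p_{\la,\al,\be}$, force the remaining odd coefficients to vanish, and then screen survivors with the $2$-Sylvester rank condition of Theorem \ref{T:syl}. Your equation from $s_3=0$ is exactly the paper's $\al^2 = 6(1-\be)/\bigl((d-2)(1-\la^2)\bigr)$. But there is a genuine gap precisely where you flag "the main obstacle": your elimination scheme ($s_5=0$, then $s_7=0,\dots$, intersected with $\be^{d-1}=1$) requires checking each of the $d-1$ roots of unity $\be$ separately, and you give no uniform-in-$d$ argument that no admissible point survives. The closing heuristic --- that a wild representation forces three simultaneous representations and that this is "highly overdetermined" --- cannot substitute for a proof: the $d=4$ identity \eqref{E:quartic13} shows that three simultaneous representations do occur, so overdetermination alone rules out nothing without explicit computation.

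The missing idea is a second symmetry that collapses the $\be$-candidates to a list of size two \emph{independent of $d$}. Since $p_{\la,\al,\be}$ is even, so is $p_{\la,\al,\be}(y,x)$, hence so is
\[
\la^{-1}\bigl(p_{\la,\al,\be}(x,y) - p_{\la,\al,\be}(y,x)\bigr) = (x^2+\al xy+\be y^2)^d - (\be x^2+\al xy+y^2)^d,
\]
whose odd coefficients $b_{2j+1}$ must also vanish. The equation $b_3=0$ factors as $(1-\be^{d-3})(6\be+\al^2(d-2))=0$; combined with $a_3=0$ and $\be^{d-1}=1$ this forces either $\be=-1$ with $d$ odd, or $\be=\la^{-2}$ with $\al^2=-6/(\la^2(d-2))$. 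From there $a_5=0$ forces $\la^2=-1$ in both branches (handing you $d=4$ and $d=5$ as finite computations, where the Sylvester test eliminates everything except the case embedded in \eqref{E:late4}, i.e.\ \eqref{E:4last2}), and for $d\ge 6$ one has $d\equiv 1\pmod 4$, so $d\ge 9$, and the single coefficient $a_7$ in \eqref{E:7} is visibly nonzero --- finishing the wild case with three explicit coefficients rather than an open-ended resultant cascade. Without the $b_{2j+1}$ equations your plan does not close.
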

\begin{proof}
In view of Lemma \ref{KeyLemma}, we simplify our notation: let
\begin{equation}\label{E:wild22}
\begin{gathered}
p_{\la,\al,\be}(x,y) = \sum_{i=0}^{2d} a_i(\la,\al,\be;d) x^{2d-i}y^i.
 \end{gathered}
\end{equation}
Since $p_{\la,\al,\be}(x,y)$ is even, so is $p_{\la,\al,\be}(y,x)$, as is their difference.
For this reason, write 
\begin{equation}\label{E:wild23}
\begin{gathered}
 \la^{-1}(p_{\la,\al,\be}(x,y) - p_{\la,\al,\be}(y,x) ) 
= (x^2 + \al x y + \be y^2)^d - (\be x^2 + \al x y +  y^2)^d \\
=  \sum_{i=0}^{2d} b_i(\al,\be,d) x^{2d-i}y^i.
\end{gathered}
\end{equation}
We need to find the conditions under which $a_{2j+1}(\la,\al,\be;d)= 0$ for $1 \le 2j+1 \le 2d-1$.
Since $\la b_i(\al,\be) = a_i(\la,\al,\be;d) - a_{2d-i}(\la,\al,\be;d)$ and $\la \neq 0$, it suffices to
consider $a_{2j+1}(\la,\al,\be;d)=b_{2j+1}(\al,\be,d)= 0$ for $1 \le 2j+1 \le d$.

It follows from the definition and $\be^{d-1} = 1$ that
\begin{equation}\label{E:switcheroo}
\begin{gathered}
p_{\la,\al,\be}(x,y) = p_{\la,-\al,\be}(x,-y), \quad 
p_{\la,\al,\be}(x,y) = p_{\la\be,\al/\be,1/\be}(y,x), 
\end{gathered}
\end{equation}
so that, up to linear change,  if $\al^2 = \kappa$ is known, then choosing $\al = \pm\sqrt{\kappa}$   
gives two equations that are cousins. 
Also, any solution for a particular value $\be = \be_0$ will be a cousin of a solution
in which $\be = \be_0^{-1}$. This reduces the number of choices to check.

We now have
\[
\begin{gathered}
a_1(\la,\al,\be) = -d\al\la+d\al\la = 0, \quad
b_1(\al,\be) = d\al(\be^{d-1}-1) =0, \\ 
a_3(\la,\al,\be) = \frac{\la\al d(d-1)}6 \cdot\left( (d-2)\al^2(1-\la^2) + 6(\be-1)\right), \\
b_3(\al,\be) = \frac{\al d(d-1)}6 \cdot (1-\be^{d-3})(6\be + \al^2(d-2)).
\end{gathered}
\]
Now we claim that  $\be \neq 1$ and either
\begin{equation}\label{E:alts1}
\be = -1, \quad \al^2 = \frac{12}{(d-2)(1-\la^2)} \qquad \text{(and $d$ is odd)};
\end{equation}
or
\begin{equation}\label{E:alts24}
\be = \frac 1{\la^2},\quad \al^2 = -\frac 6{\la^2(d-2)}.
\end{equation}
Indeed, since $\al(1-\la^2) \neq 0$, the equation $a_3=0$ implies that $\be \neq 1$ and 
\begin{equation}\label{E:a3}
\al^2 = \frac {6(1-\be)}{(d-2)(1-\la^2)}.
\end{equation}
The equation $b_3=0$ implies that $ (1-\be^{d-3})(6\be + \al^2(d-2)) = 0$. If $\be^{d-3} = 1$,
then $\be^{d-1} = 1$ implies $\be^2 = 1$, and $\be = 1$ is ruled out, so $\be = -1$ and $d$ is odd
and \eqref{E:a3} implies \eqref{E:alts1}. Otherwise, we have by \eqref{E:a3},
\[
0 = 6\be + \al^2(d-2) = 6\be + \frac {6(1-\be)}{(1-\la^2)} = \frac{6(1-\be\la^2)}{1 - \la^2},
\]
so $1 =\be \la^2$ and by \eqref{E:a3},
\[
\al^2 = \frac {6(1-\la^{-2})}{(d-2)(1-\la^2)} = -\frac 6{\la^2(d-2)};
\]
this is summarized as \eqref{E:alts24}.

If $d=4$, then only \eqref{E:alts24} can apply. Since $\be^3 = 1$, $\be \neq 1$ and 
$\om\cdot\om^2 = 1$, we can use \eqref{E:switcheroo} to assume that $\be = \om^2$. 
It follows from \eqref{E:alts24} that
\[
\om^2 = \frac 1{\la^2}, \quad \al^2 =  - \frac 3{\la^2} \implies 
\la = \pm \om^2, \ \al^2 = - 3\om^2.
\]
By \eqref{E:switcheroo}, it suffices to take $\al = \sqrt{-3}\ \om$, but there are two values for 
$\la$: $\la = \pm \om^2$. There are two  wild cases: since $\la\al = \pm \sqrt{-3}$ and 
$(\om^2)^4 = \om^2$,
these are 

\begin{equation}\label{E:wild3}
\begin{gathered}
p_{4,\pm}(x,y):= (x^2  \mp \sqrt{-3}\ xy + y^2)^4 \pm 
\om^2(x^2 +\sqrt{-3}\ \om x y + \om^2 y^2)^4\\= 
 (x^2  \mp \sqrt{-3}\ xy + y^2)^4 \pm 
(\om^2x^2 +\sqrt{-3}\ x y + \om y^2)^4.
\end{gathered}
\end{equation}

We scale the two cases of \eqref{E:wild3} to make them easier to work with. First
\begin{equation}\label{E:w41}
\begin{gathered}
\om^2p_{4,+}(x,\om i y) :=q_1(x,y) = -x^8-14x^6y^2-42x^4y^4-14x^2y^6-y^8 \\
= (\om^2 x^2 - \sqrt{3}\ xy - \om y^2)^4  +(\om x^2 + \sqrt{3}\ xy - \om^2 y^2)^4.
\end{gathered}
\end{equation}
The second line in \eqref{E:w41} is $f_{1,4}^4 + f_{1,5}^4$, which gives a 
new representation after $y \mapsto -y$, namely, $f_{1,3}^4 + f_{1,6}^4$; c.f. \eqref{E:4last1}. 
However, 
the 2-Sylvester matrix of $q_1$ has rank 3, so this case does not fall under Theorem \ref{T:even}.

For the other case, we have 
\begin{equation}\label{E:w42}
\begin{gathered}
- \om^2 p_{4,-}(x,\om i  y) := q_2(x,y) = \\
-( \om^2 x^2 - \sqrt{3} \ xy - \om y^2)^4+  (\om x^2 - \sqrt{3}  \ xy -\om^2 y^2)^4 \\
= \sqrt{-3}\ (x^8 - 14 x^6 y^2 + 14x^2y^6 - y^8).
\end{gathered}
\end{equation}
The 2-Sylvester matrix of $q_2$ has rank 2, so it has a representation as a sum of
two fourth powers. Indeed, \eqref{E:w42} is embedded in \eqref{E:late4}, with two
other representations of $q_2$: one from taking $y \mapsto -y$ in \eqref{E:w42}, and
the other by applying Theorem \ref{T:syl}.

Now suppose $d \ge 5$; more equations need to be satisfied. If \eqref{E:alts1} holds, then 
\[
\begin{gathered}
a_5 = -\frac{8 \sqrt 3 \la(1+\la^2)(d+1)d(d-1)(d-3)}{5((d-2)(1-\la^2))^{3/2}} = 0,
\end{gathered}
\]
so  $\la^2 = -1$, and \eqref{E:alts1} becomes
\begin{equation}\label{E:alts11}
\be = -1, \quad \la^2 = -1, \quad \al^2 = \frac{6}{d-2}.
\end{equation}
If \eqref{E:alts24} holds, then
\begin{equation}\label{E:517}
\begin{gathered}
a_5 = -\frac{ \sqrt 6 (\la^4-1)(2d+1)d(d-1)(d-4)}{10\la^4(d-2)^{3/2}}.
\end{gathered}
\end{equation}
Since $\la^2 \neq 1$, \eqref{E:517} implies $\la^2 = -1$, and simplification yields 
\eqref{E:alts11} again. Observe that $\la = \pm i$ implies that $d \equiv 1 \mod 4$.

If $d=5$, then $\be = -1$, $\la^2 = -1$ and $\al^2 = 2$. 
We choose $\al = \sqrt{2}$ and obtain two  solutions, for $\la = i$ and $\la = -i$, which
we rewrite in terms of the $f_{2,j}$'s, upon noting that $\pm i = (\pm i)^5$:

\begin{equation}\label{E:w5}
\begin{gathered}
p_{5,+}(x,y) = (x^2 -  i\sqrt{2}  xy + y^2)^5 + i(x^2 +\sqrt{2} x y - y^2)^5 = -f_{2,3}^5 -f_{2,4}^5\\ = 
(1+i)(x^{10} + 15 i x^8 y^2 - 30 x^6 y^4 + 30 i x^4y^6-15 x^2y^8 - i y^{10} )
\\
p_{5,-}(x,y) = (x^2 +  i\sqrt{2}  xy + y^2)^5 - i(x^2 +\sqrt{2} x y - y^2)^5 = f_{2,1}^5 + f_{2,4}^5\\=
(1-i)(x^{10} - 15 i x^8 y^2 - 30 x^6 y^4 - 30 i x^4y^6-15 x^2y^8 + i y^{10} )
\end{gathered}
\end{equation}
The expressions in \eqref{E:w5}
 are close cousins; in fact, $p_{5,-}(x,y) = -ip_{5,+}(x,iy)$. Theorem \ref{T:syl} shows that neither 
has a representation as a sum of two even 5th powers; however, $p_{5,-}(x,y) + ip_{5,+}(x,iy) = 0$ 
is a cousin of \eqref{E:quintic11}.

Suppose now that $d \ge 6$; since $d \equiv 1 \mod 4$, we have $d \ge 9$. It turns out
that $b_5=0$ under the conditions of \eqref{E:alts11}, but 
\begin{equation}\label{E:7}
\begin{gathered}
a_7\left(\pm i,\sqrt{\tfrac 6{d-2}},-1,d\right) = 
\pm \frac{8i\sqrt 2(2d-1)(d^3-d)(d-3)(d-5)}{35\sqrt{3}(d-2)^{5/2}}= 0
\end{gathered}
\end{equation}
is clearly impossible for $d \ge 9$, so we are finally done with the wild case.
\end{proof}

\begin{proof}[Proof of Theorems \ref{MT}(8), \ref{W4} and \ref{W5}]
Combine Theorems \ref{T:even}, \ref{T:tameanswer}, and \ref{T:wildanswer}.
\end{proof}

\section{Final remarks}
\subsection{Derivations and historical examples}

It is foolhardy for a living author
to claim priority for any polynomial identity which is verifiable by hand and so
might well have been given as a school algebra assignment. 
We have given previous attributions when we could find them; the pre-1920 literature was scoured 
by Dickson in \cite{D}, but with Diophantine equations over $\nn$ in mind: the coverage of
parameterizations over $\cc$ must be regarded as incomplete. For example, the 1880 paper
\cite{Deb} by Desboves includes both \eqref{E:quartic12} and \eqref{E:quintic11}, and Dickson only 
cites the latter, perhaps because there were no real quintic parameterizations.

Any four binary quadratic forms are linearly dependent, so any $\mathcal W_2(4,d)$ satisfies
both $f_1^d + f_2^d = f_3^d + f_4^d$ and $c_1f_1 + c_2f_2 + c_3f_3+c_4f_4 = 0$ for suitable
$c_i$. It is remarkable that one can find the $\mathcal W_2(4,d)$'s for $d=4,5$ by guessing a 
simple choice of $c_i$'s.

For example, Desboves \cite[p.241]{Deb}  found his version of \eqref{E:quintic11} by assuming 
$f_1 + f_2 = f_3 + f_4$ and $f_1^5 + f_2^5 = f_3^5 + f_4^5$ and parameterizing, to get
\[
0 = (f+g)^5 + (f-g)^5 - ((f + h)^5 + (f-h)^5) = 10f(g^2-h^2)(2f^2 + g^2 + h^2).
\]
He then set $\{f,g,h\} = \{2xy, x^2 -2y^2, i(x^2+2y^2)\}$ via Theorem \ref{W2} and by
scaling via $y \mapsto \sqrt{-1/2}y$, this becomes essentially
\eqref{E:quintic11}. Similarly, after noting that
\[
(f+g)^4 + (f-g)^4 - ((f + h)^4 + (f-h)^4) = 2(g^2-h^2)(6f^2 + g^2 + h^2),
\]
Desboves solved $6f^2 + g^2 + h^2=0$ and derived a cousin  of \eqref{E:quartic12}.

One might also guess  $f_1+f_2+f_3 = 0$;
an old observation (at least back to Proth in 1878 \cite[p.657]{D}) notes that
\begin{equation}\label{E:dio4}
f_1^4 + f_2^4 + (-f_1-f_2)^4 = 2(f_1^2 + f_1f_2+f_2^2)^2,
\end{equation}
so if $f_1^2 + f_1f_2+f_2^2 = g^2$, we obtain a $\mathcal W_2(4,4)$.
Take $f_1= x^2 + y^2$ and  $f_2 = \om x^2 + \om^2 y^2$; this implies
$-(f_1+f_2) = \om^2 x^2 + \om y^2$ and $f_1^2 + f_1f_2+f_2^2 = 3x^2y^2$; hence
$\eqref{E:quartic}$.

In 1904, Ferrari (see \cite[p.654]{D}) gave the ostensibly ternary identity:
\begin{equation}\label{E:fer}
\begin{gathered}
(a-b)^4(a+b+2c)^4 + (b+c)^4(b-c-2a)^4 + (c+a)^4(c-a+2b)^4 \\ 
= 2(a^2 + b^2 + c^2 - ab +ac + bc)^4 
\end{gathered}
\end{equation}
Let $x = a - b$ and $y = b+c$, so that $x+y = a+c$. Then  \eqref{E:fer} becomes 
\eqref{E:quarcous}:
\[
x^4(x+2y)^4 + y^4(-2x-y)^4 + (x+y)^4(y-x)^4 = 2(x^2+xy+y^2)^4.
\]

One can derive \eqref{E:quartic13} by guessing
$(a+d)^4 - (a-d)^4 = (b+d)^4 - (b-d)^4 = (c+d)^4 -(c-d)^4$ for quadratics 
$a,b,c,d $ with $a, b, c$ distinct and  $d\neq 0$. Then routine computations lead to
$a+b+c=0$ and $d^2 = -(a^2+ab+b^2)$. Now set 
$a = x^2 + y^2, b = \om x^2 + \om^2 y^2, c = \om^2 x^2 + \om y^2$, with 
$d^2 = - (a^2 + ab + b^2) = - 3 x^2y^2$, and take $y \mapsto iy$ to get  \eqref{E:quartic13}.

We derived \eqref{E:quintic11} in
\cite[pp.119-120]{Re1} using Newton's Theorem on symmetric polynomials.
  Every symmetric quaternary quintic polynomial $p$ is contained in the ideal 
$\mathcal I = \left( t_1 + t_2+t_3+t_4,  t_1^2 + t_2^2+t_3^2+t_4^2 \right)$.
In particular,  $t_1^5+t_2^5+t_3^5+t_4^5 \in \mathcal I$, so 
\[
f_1+f_2+f_3+f_4 = 0,\  f_1^2+f_2^2+f_3^2+f_4^2 = 0 \implies  f_1^5+f_2^5+f_3^5+f_4^5 = 0.
\]
Upon setting $f_4 = -f_1-f_2-f_3$, the equation $f_1^2+f_2^2+f_3^2+(-f_1-f_2-f_3)^2=0$ can be
analyzed as in Theorem \ref{W2} to obtain \eqref{E:quintic11}.

We present a similar {\it ad hoc, post hoc} derivation for \eqref{E:14}.
\begin{theorem}\label{T:14}
Suppose $S(t_1,\dots,t_6)$ is a symmetric polynomial of degree 7. Then
\[
S\in \mathcal I := \left( \sum_{k=1}^6  t_k, \sum_{k=1}^6  t_k^2, \sum_{k=1}^6  t_k^4 \right).
\]
\end{theorem}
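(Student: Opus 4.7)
The plan is to invoke Newton's fundamental theorem on symmetric polynomials in the form that says the graded ring of symmetric polynomials in $t_1,\dots,t_6$ is the polynomial ring $\cc[p_1,p_2,p_3,p_4,p_5,p_6]$, where $p_k = \sum_{j=1}^6 t_j^k$ carries degree $k$. Since this is a polynomial ring in exactly six algebraically independent generators (the number of variables), every symmetric polynomial of degree $7$ is a $\cc$-linear combination of monomials $p_1^{a_1}p_2^{a_2}p_3^{a_3}p_4^{a_4}p_5^{a_5}p_6^{a_6}$ with weighted degree $\sum_k k\,a_k = 7$ (note that $p_7$ itself gets absorbed into this description via Newton's identities, since $p_7$ is a polynomial in $p_1,\dots,p_6$ in $6$ variables).

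Next, I would observe that any such monomial with $a_1>0$, $a_2>0$, or $a_4>0$ visibly lies in $\mathcal I=(p_1,p_2,p_4)$. Therefore it is enough to show that no monomial $p_3^{a_3}p_5^{a_5}p_6^{a_6}$ of weighted degree $7$ exists; that is, the Diophantine equation
\[
3a_3 + 5a_5 + 6a_6 = 7, \qquad a_3,a_5,a_6 \in \zz_{\ge 0},
\]
has no solution.

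Finally, I would verify this by inspection on $a_3$: if $a_3 = 0$ we would need $5a_5+6a_6 = 7$, which is impossible since $\{0,5,6,10,11,12,\dots\}$ misses $7$; if $a_3=1$ we would need $5a_5+6a_6=4$, impossible; if $a_3\ge 2$ the weight is already at least $6$ and the remainder $\le 1$ cannot be achieved with $5$'s and $6$'s. Hence no such monomial exists, every symmetric $S$ of degree $7$ is a sum of monomials each divisible by one of $p_1,p_2,p_4$, and so $S\in\mathcal I$.

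The substantive step is the initial reduction via Newton's theorem, which converts the problem from one about infinitely many symmetric polynomials into a finite numerical check on weighted partitions; there is no real obstacle beyond recognizing that the constraint to six variables makes $\{p_1,\dots,p_6\}$ a free generating set, so that weighted-degree bookkeeping works without further subtlety.
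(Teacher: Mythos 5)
Your proof is correct and follows essentially the same route as the paper: Newton's theorem reduces the claim to the observation that $7$ is not a non-negative integer combination of $3$, $5$, and $6$. The only difference is that you take the power sums $p_1,\dots,p_6$ as the free generating set, so the ideal generators already appear among your generators, whereas the paper uses the elementary symmetric polynomials $e_k$ and first rewrites $\mathcal I$ as $(e_1,e_2,e_4)$ via the Newton identities; both choices work equally well in six variables over $\cc$.
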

\begin{proof}
Let $e_k$ denote the $k$-th elementary symmetric polynomial. We have $ \sum_{k=1}^6  t_k^2
= e_1^2 - e_2$ and $ \sum_{k=1}^6  t_k^4 = e_1^4 - 4e_1^2e_2 + 2 e_2^2 + 4e_1e_3 - 4e_4$.
Thus, $\mathcal I =  \left(e_1,e_2,e_4\right)$. By Newton's Theorem, $S$ is a linear combination of 
monomials in the $e_k$'s: $e_1^{a_1}e_2^{a_2}e_3^{a_3}e_4^{a_4}e_5^{a_5}e_6^{a_6}$, where
$\sum ka_k = 7$. But 7 cannot be written as a non-negative linear combination of
3, 5 and 6, so each monomial in any such expression
 must contain one of $\{ e_1, e_2,  e_4\}$. 
 \end{proof}

Observe now that if we define $h_j = (\ze_5^{j-1} x^2 + i x y + \ze_5^{-(j-1)} y^2)^2$ for
$1 \le j \le 5$ and $ h_6 = - 5x^2y^2$, then a synching computation shows that 
$\sum_{j=1}^6 h_j = \sum_{j=1}^6 h_j^2 = \sum_{j=1}^6 h_j^4 = 0$. Theorem \ref{T:14} implies that
$\sum_{j=1}^6 h_j^7 = 0$; that is, \eqref{E:14}. 
The mystery now is {\it why} these particular squares work. 

Jordan Ellenberg has
suggested  the following explanation to the author: The surface cut out by $\sum_{j=1}^6 X_j =
\sum_{j=1}^6 X_j^2 = \sum_{j=1}^6 X_j^4$ is a Hilbert modular surface (see \cite[Lemma 2.1]{E1}).
He adds \cite{E2}: ``Dollars to donuts
the nice low-degree rational curve you find on this surface arises as a modular curve on this
modular surface, parametrizing abelian surfaces isogenous to a product of elliptic curves".

\subsection{Representations as a sum of at most two $d$-th powers of quadratic forms}

Which forms $p \in H_{2d}(\cc^2)$ can be written as a sum of two $d$-th
powers of linear forms, and in how many ways? Let 
$A_{d,2}  = \{ (\al_1 x + \be_1 y)^d + (\al_2 x + \be_2 y)^d\}$. 
It is tautological to say that $p \in A_{d,2}$ if and only if there is a linear change taking $p$ into 
$x^d$ or $x^d+y^d$. (A practical test is given by Theorem \ref{T:syl}.)

\begin{corollary}\label{C:62}
If $p \in H_{2d}(\cc^2)$ is not a $d$-th power, then $p$ is a sum of two $d$-th powers of quadratic 
forms if and only if  either (i) $p = \ell^d q$, where $q \in A_{d,2}$, or (ii) after a linear change in 
$p$, $p(x,y) = q(x^2,y^2)$, where $q \in A_{d,2}$. 
\end{corollary}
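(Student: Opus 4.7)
The plan is to split into the easy reverse direction and a $\gcd$-based case analysis for the forward direction. For the $(\Leftarrow)$ direction, in case (i) writing $q = (\alpha_1 x + \beta_1 y)^d + (\alpha_2 x + \beta_2 y)^d \in A_{d,2}$ yields
\[
p = \ell^d q = (\ell(\alpha_1 x + \beta_1 y))^d + (\ell(\alpha_2 x + \beta_2 y))^d,
\]
a sum of two $d$-th powers of quadratic forms. In case (ii), after inverting the indicated linear change, $q(x^2,y^2) = (\alpha_1 x^2 + \beta_1 y^2)^d + (\alpha_2 x^2 + \beta_2 y^2)^d$, again manifestly a sum of two quadratic $d$-th powers.

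For the $(\Rightarrow)$ direction, suppose $p = f_1^d + f_2^d$ with $f_1, f_2 \in H_2(\cc^2)$. Since $p$ is not a $d$-th power, $f_1$ and $f_2$ cannot be proportional, since otherwise $f_2 = c f_1$ would give $p = (1+c^d)f_1^d$, which is itself a $d$-th power. Let $\ell := \gcd(f_1, f_2)$; as $f_1, f_2$ are non-proportional quadratics, $\deg \ell \in \{0, 1\}$, and the argument splits on this.

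If $\deg \ell = 1$, write $f_i = \ell g_i$ for distinct linear forms $g_i$; then $p = \ell^d(g_1^d + g_2^d) = \ell^d q$ with $q \in A_{d,2}$, giving case (i). If $\deg \ell = 0$, so $f_1$ and $f_2$ are relatively prime, invoke Theorem \ref{T:diag} to obtain a linear change $M$ under which both $f_1 \circ M$ and $f_2 \circ M$ are even, say $(f_i \circ M)(x,y) = \alpha_i x^2 + \beta_i y^2$. Then $(p \circ M)(x,y) = q(x^2, y^2)$, where $q(u,v) := (\alpha_1 u + \beta_1 v)^d + (\alpha_2 u + \beta_2 v)^d \in A_{d,2}$, giving case (ii).

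There is no serious obstacle to this argument; the corollary is a direct consequence of a two-case gcd analysis, with Theorem \ref{T:diag} handling the coprime case. The only subtlety worth noting is that the hypothesis ``$p$ is not a $d$-th power'' is used precisely to guarantee that $f_1$ and $f_2$ are non-proportional, so that $\deg \ell \le 1$ and the splitting into (i) and (ii) is exhaustive.
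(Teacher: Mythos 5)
Your proof is correct and follows essentially the same route as the paper: the easy sufficiency check, then a $\gcd$ dichotomy for necessity, with the linear common factor giving case (i) and Theorem \ref{T:diag} giving case (ii). Your explicit justification that $f_1,f_2$ are non-proportional (which the paper simply assumes under the word ``honest'') is a welcome touch of added care, but not a different argument.
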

\begin{proof}
Sufficiency is clear. Conversely, suppose $p = f_1^d + f_2^d$ and $\{f_1,f_2\}$ is honest. As in 
Theorem \ref{T:diag}, there
are two cases. If $\gcd(f_1,f_2) = \ell$ for a linear form $\ell$, then $f_j = \ell \ell_j$, 
giving case (i). Otherwise, we make a linear change which simultaneously diagonalizes $f_1,f_2$,
giving case (ii). 
\end{proof}

If $p$ is a sum of two $d$-th powers in more than one way, then the two representations
together give a $\mathcal W_2(d,4)$. The question is not interesting for $d=2$, since
$p = f^2 + g^2 \iff p = (f+ig)(f-ig)$, so two representations as a sum of two squares amount
to two different factorizations into equal degrees. The situation for $d=3$ is discussed in 
detail in \cite{Re3}; by Theorem \ref{MT}(8), it suffices now to consider $d=4,5$.

If $p$ itself is a $d$-th power, then by Theorem \ref{MT}(3), it does not have another representation 
as a sum of two $d$-th powers. 
In view of Theorems \ref{W4}, \ref{W5}, \ref{T:44}, we have an immediate corollary. 
We choose even representatives (from Theorem \ref{T:even}) and they also happen
to be symmetric (we have taken $y \mapsto \zeta_{16}y$ in \eqref{E:late4}.)
\begin{corollary}\label{C:63}

\

(i) The form $p \in H_8(\cc^2)$ has exactly  two different representations as a sum of two fourth 
powers of binary forms if and only if, after a linear change, it is  
$x^8 + 4 x^6y^2 - 12 x^4y^4 + 4x^2y^6 + y^8$, $x^8-68x^6y^2+6x^4y^4-68x^2y^6+y^8$,
 or
$x^8 -140 x^6y^2 + 294 x^4y^4 -140 x^2y^4 + y^8$.

(ii) The form $p \in H_8(\cc^2)$ has three different representations as a sum of two fourth powers 
of  binary forms if and only if, after a linear change, it is 
$x^8 - 7\sqrt{2}(1+i)x^6y^2 - 7\sqrt{2}(1+i) x^2y^6+y^8$.

(iii) The form $p \in H_{10}(\cc^2)$ has two different representations as a sum of two fifth powers of 
binary forms if and only if, after a linear change, it is $x^{10} - 75 x^8y^2+ 90x^6y^4 + 90x^4y^6 - 
75 x^2y^8 + y^{10}$.
\end{corollary}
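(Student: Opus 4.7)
The plan is to deduce Corollary \ref{C:63} from the classification of $\mathcal W_2(4,d)$ sets given by Theorems \ref{W4} and \ref{W5}, together with the explicit diagonalizations recorded in Theorem \ref{T:44}. Suppose $p \in H_{2d}(\cc^2)$ admits two distinct representations $p = f_1^d + f_2^d = f_3^d + f_4^d$ as a sum of two $d$-th powers of quadratic forms. Then $\{f_1,f_2,f_3,f_4\}$ is necessarily an honest $\mathcal W_2(4,d)$ set, since a shared summand between the two pairs (up to a $d$-th root of unity) would collapse them to one representation. By Theorem \ref{MT}(3), $p$ is not itself a $d$-th power, and Theorem \ref{T:even} provides a linear change after which $\{f_1,f_2\}$ is simultaneously diagonalized and $p$ is even.

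The classification then bounds the possible $p$ up to linear change by the right-hand sides of the diagonalized identities in Theorem \ref{T:44}. For (iii), Theorem \ref{W5} gives a single $\mathcal W_2(4,5)$ cousin class, and its diagonalization \eqref{E:tame51} produces, up to nonzero scalar, the unique form $x^{10} - 75x^8y^2 + 90x^6y^4 + 90x^4y^6 - 75x^2y^8 + y^{10}$. For (i) and (ii), Theorem \ref{W4} gives two classes; their diagonalizations \eqref{E:tame43}, \eqref{E:tame41}, \eqref{E:late4}, \eqref{E:4last2} supply the candidate octics. Three of these match the forms listed in (i) after normalization, while \eqref{E:late4}, after the scaling $y \mapsto \zeta_{16} y$, becomes the symmetric form in (ii).

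For the converse direction of (ii), three distinct representations of a single $p$ produce six pairwise distinct quadratics and hence a $\mathcal W_2(6,4)$ configuration. By Theorem \ref{W4} this configuration is (a cousin of) \eqref{E:quartic13}, whose three equal differences $f_{1,i}^4 - f_{1,j}^4$ (with common value $8\sqrt{3}\,xy(x^6-y^6)$) become, via $-z^4 = (\zeta_8 z)^4$, three distinct representations of that value as a sum of two fourth powers; a suitable linear change symmetrizes the result into the stated expression. The sibling diagonalization \eqref{E:4last2} of the same $\mathcal W_2(6,4)$ restricts to a sub-cousin with only two representations (by the remark after \eqref{E:4last1}, which uses Theorem \ref{T:syl} to show that a third sum-of-two-fourth-powers representation is unavailable), and hence belongs to (i) rather than (ii).

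The main technical obstacle is the finiteness check: one must rule out a hidden third representation for any form of (i), which would merge it into the orbit of (ii), and a fourth representation for the form of (ii). Since the number of distinct representations is a $\mathrm{GL}_2$-invariant, and Theorem \ref{W4} leaves only the single $\mathcal W_2(6,4)$ cousin class represented by \eqref{E:quartic13}, any $p$ with three or more representations lies in that single orbit. The forms in (i) are thereby separated from the form in (ii). A fourth representation for (ii) would force an extension to a $\mathcal W_2(8,4)$ beyond \eqref{E:quartic13}, which is excluded because the space of linear relations among the six fourth powers in \eqref{E:quartic13} is two-dimensional with exactly three $2$-plus-$2$ partitions, exhausting the available identities.
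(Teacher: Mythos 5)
Your proposal follows essentially the same route as the paper, which presents this corollary as an immediate consequence of Theorems \ref{W4}, \ref{W5} and \ref{T:44}: one chooses even representatives via Theorem \ref{T:even} and applies $y\mapsto\zeta_{16}y$ to \eqref{E:late4} to obtain the symmetric form in part (ii), exactly as you describe. One caution on your final sentence: excluding a fourth representation in (ii) by counting linear relations among the six fourth powers of \eqref{E:quartic13} does not work, since a hypothetical new pair need not lie in the span of those six forms; the correct exclusion is that the even pair is unique by the argument of \eqref{E:shrink}, while Theorems \ref{T:tameanswer} and \ref{T:wildanswer} exhaustively enumerate every non-even pair whose $d$-th powers sum to an even form, so no representation can have been missed.
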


\subsection{Open questions}

We have already noted that there exists $k \ge 2$ and $d \ge 6$ so that $\Phi_k(d) >
\Phi_{k+1}(d)$. Gundersen in \cite{Gu} found three meromorphic (not
rational) functions $g_j(t)$ so that $g_1^6 + g_2^6 + g_3^6 = 1$. It is unknown whether
this can be achieved with rational functions. If so, a $\mathcal W_k(4,6)$ set would exist
for some $k>2$.

In case $m = rs$, an $m$-synching on $m$ can be viewed as $r$ coordinated $s$-synchings.
We have not found a useful instance in this when $r=s=2$, although \eqref{E:142} shows
what can happen with $(r,s) = (2,3)$. We hope that improvements on the bounds may
come from careful investigations in this direction.

Another natural question is to restrict our attention to forms with coefficients in a fixed
subfield of $\cc$, such
as $\qq$ or $\rr$. Real forms with even degree also lead to a discussion of ``signatures". 
From the Diophantine point of view,
the equations $A^4 + B^4 + C^4 = D^4$ and $A^4 + B^4 = C^4 + D^4$ are completely
different questions. In this point of view, the real equation \eqref{E:quarcous} is ``(3,1)".  
In 1772, Euler gave a famous (2,2) ``septic" example of a $\mathcal W_7(4,4)$ set
(see \cite[pp.644-646]{D},  \cite[(13.7.11)]{HW},  \cite{L}). 
 So far as we have been able to determine, there are no known real solutions of this kind of
smaller degree, nor proofs that they cannot exist. 

Theorem \ref{W2} shows that \eqref{E:pyt} is ``universal" in presenting all $\mathcal W_k(3,2)$
sets; that is, projectively, all families come from the substitution $(x,y)\mapsto (g,h)$. Are the 
solutions given in Theorems \ref{W3}, \ref{W4} and \ref{W5} also universal in
this sense? The answers are ``no" for $d=3,4$. These families are all linearly dependent. 
For $d=3$, the family in \eqref{E:quarcube} is linearly independent, as are the 
parameterizations of the Euler-Binet solutions 
to $x^3 + y^3 = u^3 + v^3$ (see e.g \cite[(13.7.8)]{HW}), when viewed as elements of $\cc[a,b,\la]$.
 For $d=4$, it can be checked that the Euler septics are also linearly independent.
The case $d=5$ is open. Can the sets $\mathcal W_k(4,d)$ themselves 
be parameterized for $k \ge 3$?

 Finally, we note that the intricate calculations of section three and four suggest that new methods 
 will be needed to study $\mathcal W_k(r,d)$ for $r > 4$ or $k > 2$. Nevertheless, we make the
 following conjecture, based on Theorem \ref{MT}:
 
 \begin{conjecture}\label{C:end}
 There is a small constant $M$ so that, for all $k$ and $d$,
 \[
 \left\vert \Phi_k(d) -  \min_{1 \le i \le k}\left(\frac di + i \right) \right\vert < M.
 \]
 \end{conjecture}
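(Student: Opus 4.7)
The plan is to bound $\Phi_k(d)$ above and below, showing that each side differs from $\min_{1\le i\le k}(d/i+i)$ by at most a constant. The two bounds need completely different techniques, so I attack them separately.

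For the upper bound, the goal is to produce, for every $i\in\{1,\dots,k\}$, a $\mathcal W_k(r,d)$ set with $r\le d/i+i+O(1)$. When $i\mid d$ this is exactly Theorem \ref{MT}(6), which is proved by the Molluzzo-style synching identity \eqref{E:Moll}. For arbitrary $i$, write $d=iq+\rho$ with $0\le\rho<i$; the natural approach is a \emph{near-synching} construction that applies \eqref{E:basicsynch} with $m$ close to $d/i$, tolerating a bounded number of leftover monomials on the right-hand side, each contributing one extra summand. An alternative is to take a $\mathcal W_i(s,iq)$ set supplied by Theorem \ref{MT}(6) and multiply every element by a fixed form of degree $k-i$ to land in $H_k(\cc^2)$, then push the degree from $i$ up to $k$ via Theorem \ref{MT}(1); one must verify that this construction still saturates the minimum over all $i$, including those not dividing $d$.

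For the lower bound, the idea is to refine Hayman's theorem so as to exploit the restriction that the $f_j$ are binary forms of degree exactly $k$, not arbitrary meromorphic functions. As in the proof of Theorem \ref{MT}(4), a $\mathcal W_k(r,d)$ set produces $r-1$ rational functions $q_j(t)=f_j(t,1)/f_r(t,1)$ of degree at most $k$ with $\sum_{j=1}^{r-1} q_j^d=1$. The classical bound $r>1+\sqrt{d+1}$ is indifferent to the degree of the $q_j$; what is needed is a Wronskian or Nevanlinna defect-relation argument that takes account of the fact that each $q_j^d$ has at most $kd$ zeros and $kd$ poles on $\mathbb P^1$. The heuristic is that if $r$ is much smaller than $d/k+k$, the common ramification forced by $\sum q_j^d = 1$ becomes incompatible with the count of zeros and poles allowed by the degree constraint, so any successful argument must convert this incompatibility into a sharp inequality linear in $d/k$.

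The hard part, by far, will be the lower bound. The results of Green, Hayman, and Gundersen on sums of powers of meromorphic functions all yield bounds of order $\sqrt d$ that are insensitive to $k$; a genuinely new ingredient is needed to see that restricting to rational functions of bounded degree forces the bound to grow like $d/k$ when $k<\sqrt d$. A secondary obstacle on the upper-bound side is the non-monotonicity of $\Theta_e(d)$ noted after the proof of Theorem \ref{MT}(5),(6), which together with the peculiar value $\Phi_2(14)\le 6$ shows that $M$ cannot be $0$ and that identifying its sharp value is itself a delicate combinatorial number-theoretic question. This is precisely why Theorem \ref{MT}(11) is only offered as an upper bound rather than a conjectured equality.
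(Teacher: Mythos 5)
The statement you are trying to prove is Conjecture~\ref{C:end}: the paper does not prove it, and does not claim to. It is offered at the very end of section five as a guess ``based on Theorem~\ref{MT}," i.e.\ on the accumulated evidence of the upper-bound constructions (synching, Molluzzo--Newman--Slater) and the handful of lower bounds ($\Phi_k(d)\ge 4$ for $d\ge 3$, Hayman's $\Phi_k(d)>1+\sqrt{d+1}$, and the case analysis giving $\Phi_2(d)\ge 5$ for $d\ge 6$). So there is no proof in the paper to compare yours against, and your proposal --- which you yourself describe as requiring ``a genuinely new ingredient" --- is a research program rather than a proof. Both halves have genuine gaps.

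Concretely: on the lower-bound side, every technique available in the paper gives a bound that is either an absolute constant or of order $\sqrt{d}$, uniformly in $k$; nothing produces the bound of order $d/k$ that the conjecture demands for fixed small $k$ (for $k=2$ it would require $\Phi_2(d)\ge d/2+2-M$, and the only tool the paper has for lower bounds beyond Hayman is the Sylvester/diagonalization machinery of sections three and four, which already requires heavy case analysis just to handle $r=4$). Your Wronskian/Nevanlinna heuristic correctly identifies where the degree restriction ought to enter, but no such sharpening is known, and the author explicitly flags in section five that ``new methods will be needed" for $r>4$. On the upper-bound side, the near-synching construction for $i\nmid d$ is not carried out anywhere, and Theorem~\ref{MT}(6) only applies to divisors $e\mid d$; moreover the anomaly $\Phi_2(14)\le 6$ against $\min_{1\le i\le 2}(14/i+i)=9$ already forces $M\ge 4$, and the non-monotonicity of $\Theta_e(d)$ at $d=72$ shows the combinatorics of which $i$ realizes the minimum is itself unresolved. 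Your instinct that the lower bound is the hard part is exactly right --- but that means the statement remains, as the paper presents it, a conjecture.
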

 

\end{document}